\newtheorem{theorem}{Theorem}
\theoremstyle{plain}
\newtheorem{corollary}{Corollary}
\newtheorem{lemma}{Lemma}
\newtheorem{proposition}{Proposition}
\newtheorem{remark}{Remark}
\numberwithin{equation}{section}
\begin{document}
\title[volume entropy and rigidity]{An integral formula for the volume entropy with applications to rigidity}
\author{Fran\c{c}ois Ledrappier}
\address{LPMA, C.N.R.S. UMR7599, Bo\^ite Courrier 188 - 4, Place Jussieu\\
75252 Paris cedex 05, France}
\email{fledrapp@nd.edu}
\author{Xiaodong Wang}
\address{Department of Mathematics,\\
Michigan State University\\
East Lansing, MI 48824, USA}
\email{xwang@math.msu.edu}
\thanks{The first author was partially supported by NSF grant DMS-0801127}
\thanks{The second author was partially supported by NSF grant DMS-0905904}
\date{Nov 2, 2009}
\maketitle

\section{\bigskip Introduction}

Let $M^{n}$ be a compact Riemannian manifold and $\pi:\widetilde{M}\rightarrow
M$ its universal covering. The fundamental group $G=\pi_{1}\left(  M\right)  $
acts on $\widetilde{M}$ as isometries such that $M=$ $\widetilde{M}/G$.
Associated to $\widetilde{M}$ are several asymptotic invariants. In this paper
we are primarily concerned with the volume entropy $v$ defined by%
\[
v=\lim_{r\rightarrow\infty}\frac{\ln\mathrm{vol}B_{\widetilde{M}}\left(
x,r\right)  }{r},
\]
where $B_{\widetilde{M}}\left(  x,r\right)  $ is the ball of radius $r$
centered at $x$ in $\widetilde{M}$. It is proved by Manning \cite{M} and
Freire-Ma\~n\'e \cite{FM} that

\begin{itemize}
\item the limit exists and is independent of the center $x\in\widetilde{M}$,

\item $v\leq H$, the topological entropy of the geodesic flow on $M$,

\item $v=H$ if $M$ has no conjugate points.
\end{itemize}

There has been a lot of work on understanding the volume entropy of which we
only mention the celebrated paper of Besson, Courtois and Gallot \cite{BCG1}
where one can find other results and references. But the volume entropy still
remains a subtle invariant. If $M$ is negatively curved, it is better
understood due to the existence of the so called Patterson-Sullivan measure on
the ideal boundary. Let $\partial\widetilde{M}$ be the ideal boundary of
$\widetilde{M}\,$\ defined as equivalence classes of geodesic rays. We fix a
base point $o\in\widetilde{M}$ and for $\xi\in\partial\widetilde{M}$ we denote
$B_{\xi}$ the associated Busemann function, i.e.
\[
B_{\xi}\left(  x\right)  =\lim_{t\rightarrow\infty}d\left(  x,\gamma\left(
t\right)  \right)  -t,
\]
where $\gamma$ is the geodesic ray initiating from $o$ representing $\xi$. It
is well known that $B_{\xi}$ is smooth and its gradient is of length one. The
Patterson-Sullivan measure \cite{P, S, K} is a family $\left\{  \nu_{x}%
:x\in\widetilde{M}\right\}  $ of measures on $\partial\widetilde{M}$ s.t.:

\begin{itemize}
\item for any pair $x,y\in\widetilde{M}$, the two measures $\nu_{x},\nu_{y}$
are equivalent with%
\[
\frac{d\nu_{x}}{d\nu_{y}}\left(  \xi\right)  =e^{-v\left(  B_{\xi}\left(
x\right)  -B_{\xi}\left(  y\right)  \right)  };
\]

\item for any $g\in G$%
\[
g_{\ast}\nu_{x}=\nu_{gx}.
\]

\end{itemize}

The Patterson-Sullivan measure contains a lot of information and plays an
important role in \cite{BCG2}. Moreover, it is proved by Knieper, Ledrappier
and Yue (\cite{K, L2, Y1}) that the following integral formula for the volume
entropy holds in terms of the Patterson-Sullivan measure:
\begin{equation}
v=\frac{1}{C}\int_{M}\left(  \int_{\partial\widetilde{M}}\Delta B_{\xi}\left(
x\right)  d\nu_{x}\left(  \xi\right)  \right)  dx, \label{fvn}%
\end{equation}
where $C=\int_{M}\nu_{x}\left(  \partial\widetilde{M}\right)  dx$. (To
interpret the formula properly, notice after integrating over $\partial
\widetilde{M}$ we get a function on $\widetilde{M}$ which is $G$-invariant and
hence descends to $M$.) This formula shows how $v$ interacts with local geometry.

In this paper, we will extend the theory of Patterson-Sullivan measure to any
manifold without the negative curvature assumption. More generally, let
$\pi:\widetilde{M}\rightarrow M$ be a regular Riemannian covering of a compact
manifold $M$ and $G$ the discrete group of deck transformations. \ We will
consider the Busemann compactification of $\widetilde{M}$, denoted by
$\widehat{M}$. On the Busemann boundary $\partial\widehat{M}$ we will
construct Patterson-Sullivan measure which retains the essential features of
the classical theory. Namely

\begin{theorem}
\label{div} There exists a probability measure $\nu$ on the laminated space
$X_{M}=\left(  \widetilde{M}\times\partial\widehat{M}\right)  /G$ such that
for any continuous vector field $Y$ on $X_{M}$ which is $C^{1}$ along the
leaves,%
\[
\int\mathrm{div}^{\mathcal{W}}Yd\nu=v\int\left\langle Y,\nabla^{\mathcal{W}%
}\xi\right\rangle d\nu,
\]
where $\mathrm{div}^{\mathcal{W}}$ and $\nabla^{\mathcal{W}}$ are laminated
divergence and gradient, respectively.
\end{theorem}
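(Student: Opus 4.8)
The plan is to realize $\nu$ as the laminated measure attached to a Patterson--Sullivan conformal density on the Busemann boundary $\partial\widehat{M}$, and then to extract the divergence identity from a weighted integration by parts along the leaves. First I would fix the base point $o$ and form the Poincar\'e series $\mathcal P(s)=\sum_{g\in G}e^{-s\,d(o,go)}$. Because $M$ is compact, the orbit growth exponent of $G$ coincides with the volume growth exponent of $\widetilde{M}$, so the critical exponent of this series equals the volume entropy $v$. After forcing the series to be of divergent type at $s=v$ (Patterson's trick, replacing $e^{-sd}$ by a slowly varying modification if necessary), I would take weak-$*$ limits as $s\downarrow v$ of the normalized orbital measures $\frac{1}{\mathcal P(s)}\sum_g e^{-s\,d(x,go)}\,\delta_{go}$. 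Divergence guarantees the limiting mass escapes to $\partial\widehat{M}$, producing a family $\{\nu_x\}$ on $\partial\widehat{M}$. Since $d(x,go)-d(o,go)\to B_\xi(x)$ along $go\to\xi$ — this is exactly the Busemann cocycle and is continuous on $\widehat{M}$ — the limits satisfy the conformal rule $\frac{d\nu_x}{d\nu_y}(\xi)=e^{-v(B_\xi(x)-B_\xi(y))}$ together with the equivariance $g_*\nu_x=\nu_{gx}$.

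Next I would assemble the laminated measure. On $\widetilde{M}\times\partial\widehat{M}$ set $d\widehat\nu(x,\xi)=d\nu_x(\xi)\,dx$, where $dx$ is the Riemannian volume on the leaf. The pushforward under the diagonal action $(x,\xi)\mapsto(gx,g\xi)$ sends $d\nu_x(\xi)\,dx$ to $d\nu_{gx}(\xi)\,dx$ by equivariance of the density, and a change of variables in the isometry-invariant factor $dx$ shows $\widehat\nu$ is $G$-invariant. Hence $\widehat\nu$ descends to $X_M$; as $\int_F \nu_x(\partial\widehat{M})\,dx<\infty$ over a fundamental domain $F$, normalizing by this constant $C$ yields the desired probability measure $\nu$.

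To obtain the identity, I would lift a leafwise $C^1$ vector field $Y$ to a $G$-equivariant field $\widetilde Y$ and write $d\nu_x(\xi)=e^{-vB_\xi(x)}\,d\nu_o(\xi)$ with $B_\xi(o)=0$ (here $\nabla^{\mathcal W}\xi$ denotes the leafwise gradient of $B_\xi$). The pointwise identity $e^{-vB_\xi}\,\mathrm{div}^{\mathcal W}\widetilde Y=\mathrm{div}^{\mathcal W}\!\big(e^{-vB_\xi}\widetilde Y\big)+v\,e^{-vB_\xi}\langle\widetilde Y,\nabla^{\mathcal W}B_\xi\rangle$ then gives, after integrating against $d\nu_o(\xi)\,dx$ over $F\times\partial\widehat{M}$,
\[
\int \mathrm{div}^{\mathcal W}Y\,d\nu=\frac{1}{C}\int_{\partial\widehat{M}}\!\int_F \mathrm{div}^{\mathcal W}\!\big(e^{-vB_\xi}\widetilde Y\big)\,dx\,d\nu_o(\xi)\;+\;v\int\langle Y,\nabla^{\mathcal W}\xi\rangle\,d\nu ,
\]
so everything reduces to showing that the first term vanishes.

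The main obstacle is precisely this vanishing. The weight $e^{-vB_\xi(x)}$ is \emph{not} $G$-invariant, so the divergence theorem on $F$ does not make $\mathrm{div}^{\mathcal W}(e^{-vB_\xi}\widetilde Y)$ integrate to zero on its own; instead the fluxes through the paired faces of $\partial F$ must cancel. I expect to establish this by matching a face $\sigma$ with its partner $g\sigma$, changing variables by $g$ in the leaf and by $g^{-1}$ in the boundary, and invoking $g_*\nu_o=\nu_{go}$ together with the conformal factor: the transformation rule of the Patterson--Sullivan density is exactly what is needed to cancel the two flux contributions after integrating in $\xi$. A second, genuinely non-classical difficulty is that Busemann functions on a general $\widetilde{M}$ are only Lipschitz, so $\nabla^{\mathcal W}\xi$ exists merely almost everywhere; this is harmless for the integrations because $\widehat\nu$ carries the leafwise Lebesgue factor $dx$, but it must be tracked carefully when justifying the integration by parts. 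As an independent check on existence, one can also view $\nu$ as a stationary probability measure on the compact laminated space $X_M$ for the foliated drift-Laplacian $\mathrm{div}^{\mathcal W}\nabla^{\mathcal W}-v\langle\nabla^{\mathcal W}B_\xi,\nabla^{\mathcal W}\cdot\rangle$, obtained by a Krylov--Bogolyubov/Garnett argument; its stationarity is the identity of the theorem specialized to gradient fields, and the general case follows from leafwise regularity of $\nu$.
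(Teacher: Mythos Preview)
Your construction of the Patterson--Sullivan density and the laminated measure $\nu$ matches the paper's Section~2 essentially verbatim: Poincar\'e series with critical exponent $v$, Patterson's slowly-varying correction to force divergence, weak-$*$ limits yielding $d\nu_x=e^{-v\xi(x)}d\nu_o$, and the $G$-invariant product $d\nu_x(\xi)\,dx$ descending to $X_M$.

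The one genuine departure is how you propose to kill the boundary term in the integration by parts. You plan to apply the divergence theorem on a fixed fundamental domain $F$ and then cancel the fluxes through paired faces $\sigma\leftrightarrow g\sigma$ using the equivariance $g_*\nu_o=\nu_{go}$ together with the cocycle identity $(g\eta)(gx)=\eta(x)-\eta(g^{-1}o)$; carrying this through, the factors $e^{v\eta(g^{-1}o)}$ from the Busemann cocycle and $e^{-v\eta(g^{-1}o)}$ from the Radon--Nikodym derivative do indeed cancel, so the argument is sound. The paper, by contrast, never confronts boundary terms at all: it first establishes the identity for test functions $f$ whose leafwise support lies in the \emph{interior} of $M_0$ (where integration by parts against the Lipschitz weight $e^{-v\xi}$ produces no boundary contribution), observes that any sufficiently small support can be placed inside the interior of \emph{some} fundamental domain, and then globalizes with a partition of unity on $M$; the passage from $\Delta f$ to $\mathrm{div}(f_1\nabla f_2)$ to general $Y$ is by approximation. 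The paper's route is shorter and sidesteps the regularity of $\partial F$ and the divergence theorem for merely Lipschitz fields, both of which your approach must address; your route, on the other hand, makes the role of the conformal transformation rule of $\{\nu_x\}$ more transparent.
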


As an application of the above theorem, we will prove the following rigidity theorem.

\begin{theorem}
\label{Rig}Let $M^{n}$ be a compact Riemannian manifold with $Ric\geq-\left(
n-1\right)  $ and $\pi:\widetilde{M}\rightarrow M$ a regular covering. Then
the volume entropy of $\widetilde{M}$ satisfies $v\leq\left(  n-1\right)  $
and equality holds iff $M$ is hyperbolic.
\end{theorem}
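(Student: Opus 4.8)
The plan is to feed one carefully chosen vector field into the integral formula of Theorem~\ref{div} and then invoke the Ricci lower bound through comparison geometry. The natural candidate is the laminated gradient of the boundary cocycle itself, $Y=\nabla^{\mathcal W}\xi$, for which $\mathrm{div}^{\mathcal W}Y=\Delta^{\mathcal W}B_\xi$ and $\langle Y,\nabla^{\mathcal W}\xi\rangle=|\nabla^{\mathcal W}\xi|^{2}=1$. Since $\nu$ is a probability measure, Theorem~\ref{div} collapses to
\[
\int\Delta^{\mathcal W}B_\xi\,d\nu=v .
\]
The hypothesis $Ric\ge-(n-1)$ now enters via the Laplacian comparison theorem: letting $r\to\infty$ in $\Delta\,d(\cdot,\gamma(t))\le(n-1)\coth r$ yields $\Delta^{\mathcal W}B_\xi\le n-1$ (understood in the barrier/distributional sense where $B_\xi$ is merely Lipschitz). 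Integrating this pointwise bound against $\nu$ gives $v\le n-1$ immediately. For the reverse implication, if $M$ is hyperbolic then $\widetilde M=\mathbb{H}^{n}$ and the standard ball-volume computation gives $v=n-1$.

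For the forcing direction I would examine the equality case. If $v=n-1$ then $\int\bigl(n-1-\Delta^{\mathcal W}B_\xi\bigr)\,d\nu=0$ with nonnegative integrand, so $\Delta^{\mathcal W}B_\xi=n-1$ holds $\nu$-almost everywhere. Applying the Bochner formula to $B_\xi$ on the region where this equality holds, and using $|\nabla B_\xi|\equiv1$ (so that $\mathrm{Hess}\,B_\xi$ is carried by the $(n-1)$-dimensional horospherical distribution, with trace $n-1$ and hence $|\mathrm{Hess}\,B_\xi|^{2}\ge n-1$ by Cauchy--Schwarz), the gradient term drops out and one is left with
\[
0=|\mathrm{Hess}\,B_\xi|^{2}+Ric(\nabla B_\xi,\nabla B_\xi)\ge(n-1)-(n-1)=0 .
\]
Equality throughout forces both $Ric(\nabla B_\xi,\nabla B_\xi)=-(n-1)$ and the umbilicity relation $\mathrm{Hess}\,B_\xi=g-dB_\xi\otimes dB_\xi$. (One can sidestep the need for $\Delta B_\xi$ to be locally constant by instead integrating Bochner against $\nu$ and evaluating the middle term through a second application of Theorem~\ref{div} with $Y=(\Delta^{\mathcal W}B_\xi)\nabla^{\mathcal W}\xi$, which delivers the Hessian identity $\nu$-almost everywhere.)

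To conclude hyperbolicity I would recast this Hessian identity as an Obata-type equation. Setting $u=e^{B_\xi}$ gives $\mathrm{Hess}\,u=e^{B_\xi}\bigl(\mathrm{Hess}\,B_\xi+dB_\xi\otimes dB_\xi\bigr)=u\,g$, and a complete manifold carrying a nonconstant solution of $\mathrm{Hess}\,u=u\,g$ is isometric to $\mathbb{H}^{n}$ (by the Obata--Tashiro rigidity theorem). Equivalently, the Riccati equation along the integral curves of $\nabla B_\xi$ turns $\mathrm{Hess}\,B_\xi=\mathrm{Id}$ on horospheres into $R(\cdot,\gamma')\gamma'=-\mathrm{Id}$, so all radial sectional curvatures equal $-1$; as the gradient map $\xi\mapsto\nabla B_\xi(x)$ sweeps out the full unit sphere at each $x$, every sectional curvature equals $-1$ and $M$ is hyperbolic.

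The hard part will be regularity together with the upgrade from ``$\nu$-a.e.'' to ``everywhere.'' Busemann functions are in general only Lipschitz, so the very meaning of $\Delta^{\mathcal W}B_\xi$, the admissibility of $Y=\nabla^{\mathcal W}\xi$ in Theorem~\ref{div}, and the Bochner computation all have to be justified weakly or by smooth approximation; here it helps that equality in the Laplacian comparison is known to bootstrap interior smoothness on the equality set. The remaining difficulty is to produce a single boundary point $\xi$ for which the Hessian identity holds on an open piece of a leaf, hence by continuity on the whole leaf; this is where one must use that the support of the Patterson--Sullivan measure $\nu$ fills out a neighborhood in $X_M$, after which the rigidity of $\mathrm{Hess}\,u=u\,g$ applies on all of $\widetilde M$.
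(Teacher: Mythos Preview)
Your outline captures the right heuristic picture, but the step you flag as ``the hard part'' is in fact the entire content of the proof, and your sketch does not supply the missing ideas. Two concrete gaps:

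\emph{Admissibility of $Y$.} Theorem~\ref{div} requires $Y$ to be continuous on $X_M$ and $C^1$ along leaves. The vector field $Y=\nabla^{\mathcal W}\xi$ is neither: horofunctions on a general cover are only Lipschitz, so $\nabla\xi$ is defined only Lebesgue-a.e.\ and is not continuous. You cannot plug it into the formula, and the suggested variant $Y=(\Delta^{\mathcal W}B_\xi)\nabla^{\mathcal W}\xi$ is worse, since $\Delta^{\mathcal W}B_\xi$ is a priori only a distribution. The paper circumvents this by feeding in the heat-smoothed field $Y_t=\nabla(P_t\xi)$, which \emph{is} admissible, and only afterwards lets $t\to 0$.

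\emph{From ``$\nu$-a.e.'' to regularity.} Even granting the formula, the conclusion ``$\Delta B_\xi=n-1$ $\nu$-a.e.'' is not well-posed: $(n-1)-\Delta\xi$ is a nonnegative \emph{measure} on each leaf, not a function, so equality $\nu$-a.e.\ on $X_M$ does not say this measure vanishes on any single leaf, and no elliptic regularity is available to run Bochner. The paper's key device is to pass to $\phi=e^{-(n-1)\xi}$, which is distributionally \emph{subharmonic}; the integral identity, unwound through a partition of unity, shows that the nonnegative distribution $\Delta\phi$ annihilates every bump function, hence $\Delta\phi=0$ on all of $\widetilde M$ for $\nu_o$-a.e.\ $\xi$. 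Elliptic regularity then makes $\phi$ smooth, and only at that point does the Bochner argument become legitimate. Your Obata--Tashiro endgame (via $u=e^{B_\xi}$, $\mathrm{Hess}\,u=u\,g$) is a legitimate alternative to the paper's warped-product-plus-bounded-curvature finish, but it presupposes exactly the smoothness you have not yet earned.
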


The inequality $v\leq\left(  n-1\right)  $ is of course well known and follows
easily from the volume comparison theorem. What is new is the rigidity part.
To have some perspective on this result, recall another invariant: the bottom
spectrum of the Laplacian on $\widetilde{M}$, denoted by $\lambda_{0}$ and
defined as%
\[
\lambda_{0}=\inf_{f\in C_{c}^{1}\left(  \widetilde{M}\right)  }\frac
{\int_{\widetilde{M}}\left\vert \nabla f\right\vert ^{2}}{\int_{\widetilde{M}%
}f^{2}}.
\]
It is a well known fact that $\lambda_{0}\leq v^{2}/4$. Therefore as an
immediate corollary of Theorem \ref{Rig} we have the following result
previously proved by the second author \cite{W}.

\begin{corollary}
Let $\left(  M^{n},g\right)  $ be a compact Riemannian manifold with
$Ric\geq-\left(  n-1\right)  $ and $\pi:\widetilde{M}\rightarrow M$ a regular
covering. If $\lambda_{0}=\left(  n-1\right)  ^{2}/4$, then $\widetilde{M}$ is
isometric to the hyperbolic space $\mathbb{H}^{n}$.
\end{corollary}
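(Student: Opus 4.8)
The plan is to combine the spectral inequality $\lambda_0\le v^2/4$ with the entropy rigidity of Theorem \ref{Rig} to pin down $v=n-1$, conclude that $M$ is hyperbolic, and then upgrade the statement ``$M$ is hyperbolic'' to ``$\widetilde{M}\cong\mathbb{H}^n$'' by showing that the deck group of the regular covering must be trivial.

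First I would run the elementary chain. Since $\lambda_0=(n-1)^2/4$ and $\lambda_0\le v^2/4$, we get $v^2\ge(n-1)^2$, i.e. $v\ge n-1$; Theorem \ref{Rig} supplies the reverse inequality $v\le n-1$, so $v=n-1$, and the equality case of Theorem \ref{Rig} gives that $M$ has constant curvature $-1$. Hence $\widetilde{M}$, being locally isometric to $M$ and complete, is a quotient $\mathbb{H}^n/\Gamma$, where $\Gamma$ is the deck group; since the covering is regular, $\Gamma$ is a normal subgroup of the cocompact lattice $\Lambda=\pi_1(M)\subset\mathrm{Isom}(\mathbb{H}^n)$, with deck group $G=\Lambda/\Gamma$. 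The goal ``$\widetilde{M}\cong\mathbb{H}^n$'' is thus equivalent to ``$\Gamma=\{1\}$''.

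This reduction is where the real work lies. If $[\Lambda:\Gamma]<\infty$ then $\widetilde{M}$ is compact and $v=0\ne n-1$, so $\Gamma$ has infinite index; and as $\Lambda=\pi_1(M)$ is torsion-free ($M$ being a manifold), a nontrivial normal $\Gamma$ is infinite. A normal subgroup of the non-elementary word-hyperbolic group $\Lambda$ is itself non-elementary unless trivial, since an infinite elementary normal subgroup would make $\Lambda$ permute the one or two boundary points it fixes, forcing $\Lambda$ to be virtually elementary and contradicting cocompactness; hence $\Gamma$ has positive critical (Poincar\'e) exponent $\delta(\Gamma)>0$. The key input I would then invoke is the identity $v=(n-1)-\delta(\Gamma)$ relating the volume entropy of the regular cover $\mathbb{H}^n/\Gamma$ to the exponent of $\Gamma$; it yields $v=n-1$ only when $\delta(\Gamma)=0$, so $\Gamma$ is elementary and therefore trivial. (As corroboration, $\lambda_0=(n-1)^2/4$ forces $\delta(\Gamma)\le(n-1)/2$ by Sullivan's $\lambda_0$--$\delta$ formula, so $\delta$ is indeed the relevant invariant.)

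The hard part is this entropy--exponent identity, equivalently the strict drop $v<n-1$ for every nontrivial normal cover. The naive orbit count gives only $N_{\mathrm{coset}}(r)\,N_\Gamma(s)\le N_\Lambda(r+s)$, which with $s=r$ loses a factor of two in the exponent and degrades to the useless bound $v\le 2(n-1)-\delta$; the sharp relation instead requires the equidistribution of the $\Lambda$-orbit modulo $\Gamma$, i.e. precisely the Patterson--Sullivan/shadow-lemma mechanism. I would therefore either cite the entropy--critical-exponent relation for normal covers or extract $v<n-1$ directly from the Patterson--Sullivan measure furnished by Theorem \ref{div}, noting that $\widetilde{M}\cong\mathbb{H}^n$ may alternatively fall out of the equality analysis already performed in the proof of Theorem \ref{Rig}. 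Finally I would stress that the regularity (cocompactness of $\Lambda$) is indispensable: the hyperbolic cylinder $\mathbb{H}^2/\langle\gamma\rangle$ satisfies $v=n-1$ and $\lambda_0=(n-1)^2/4$ yet is not $\mathbb{H}^2$, and it is ruled out here only because a maximal cyclic subgroup is never normal in a cocompact lattice.
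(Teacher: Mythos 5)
You are right that the paper's stated derivation is exactly your first paragraph: it combines $\lambda_{0}\leq v^{2}/4$ with Theorem \ref{Rig} to get $v=n-1$ and ``$M$ hyperbolic,'' and then presents the corollary as immediate, deferring the full statement to the second author's earlier paper \cite{W}. Your instinct that something remains --- that ``$M$ hyperbolic'' only yields $\widetilde{M}\cong\mathbb{H}^{n}/\Gamma$ for some normal $\Gamma\trianglelefteq\pi_{1}(M)$, and one must still rule out $\Gamma\neq\{1\}$ --- is sound. The genuine gap is in how you close it: the identity $v=(n-1)-\delta(\Gamma)$ that you invoke as the ``key input'' is false, and false precisely in the direction your argument needs, namely the upper bound $v\leq(n-1)-\delta(\Gamma)$. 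Counterexample: let $\Lambda=\pi_{1}(M)$ be a closed hyperbolic surface group, map it onto the lamplighter group $(\mathbb{Z}/2)\wr\mathbb{Z}$ (factoring through a free quotient), and let $\Gamma$ be the kernel. The deck group $G=\Lambda/\Gamma$ is amenable, so by Brooks' theorem $\lambda_{0}(\mathbb{H}^{2}/\Gamma)=\lambda_{0}(M)=0$, and Sullivan's formula $\lambda_{0}=\delta(n-1-\delta)$ (valid once $\delta\geq(n-1)/2$) forces $\delta(\Gamma)=n-1$; your identity would then give $v=0$. But $G$ acts cocompactly on $\widetilde{M}$, so $v(\widetilde{M})$ equals the exponential growth rate of $G$ in the orbit metric, which by Milnor--Schwarz is quasi-isometric to the word metric of $G$; the lamplighter group has exponential growth, so $v>0$. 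No equidistribution or shadow-lemma refinement can rescue an equality that fails outright: at most one-sided relations between $v$ and $\delta(\Gamma)$ hold, and not the side you need.

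There are correct ways to finish, but each is a theorem rather than a formality, and none is the identity you cite. (i) Your own parenthetical remark is the right track: $\lambda_{0}(\widetilde{M})=(n-1)^{2}/4$ forces $\delta(\Gamma)\leq(n-1)/2$ by Sullivan; the complementary normal-subgroup bound $\delta(\Gamma)\geq\delta(\Lambda)/2=(n-1)/2$ (Falk--Stratmann), which is \emph{strict} when $\Lambda$ is of divergence type --- in particular cocompact --- and $\Gamma$ is nontrivial (Matsuzaki--Yabuki), then kills $\Gamma$. Note that the borderline case $\delta(\Gamma)=(n-1)/2$ must be excluded, so strictness is essential; your much weaker input $\delta(\Gamma)>0$ from non-elementarity cannot do this. (ii) The strict drop $v(\mathbb{H}^{n}/\Gamma)<n-1$ for every nontrivial normal $\Gamma$ is exactly growth tightness (Arzhantseva--Lysenok, Sambusetti), a genuine but nontrivial citation. (iii) Closest in spirit to this paper: the measure of Theorem \ref{div} and the entire equality analysis of Section 3 are carried out on an arbitrary regular cover, so $v(\widetilde{M})=n-1$ directly produces a harmonic function $\phi=e^{-(n-1)\xi}$ on $\widetilde{M}$ itself with $\left\vert\nabla\log\phi\right\vert=n-1$; the Bochner argument splits $\widetilde{M}=\mathbb{R}\times\Sigma$ with metric $dt^{2}+e^{2t}h$ and $\Sigma$ flat (bounded curvature holds since $\widetilde{M}$ covers a compact manifold), and one then argues that the cocompact deck action forces $\Sigma\cong\mathbb{R}^{n-1}$ --- or one invokes the two-harmonic-function theorem of \cite{W}, as the paper's Remark suggests --- so that $\widetilde{M}\cong\mathbb{H}^{n}$ without detouring through the universal cover. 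As written, however, your proposal rests on a false lemma at its pivotal step and therefore does not prove the statement.
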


Clearly the asymptotic invariant $v$ is much weaker than $\lambda_{0}$. It is
somewhat surprising that we still have a rigidity theorem for $v$. If $M$ is
negatively curved, Theorem \ref{Rig} is proved by Knieper \cite{K} using
(\ref{fvn}). The proof in the general case is more subtle due to the fact the
Busemann functions are only Lipschitz. In fact, it is partly to prove this
rigidity result that we are led to the construction of the measure $\nu$ and
the formula in Theorem \ref{div}.

We will also discuss the K\"{a}hler and quaternionic K\"{a}hler analogue of
Theorem \ref{Rig}. In the K\"{a}hler case our method yields the following

\begin{theorem}
\label{Ka}Let $M$ be a compact K\"{a}hler manifold with $\dim_{\mathbb{C}}M=m$
and $\pi:\widetilde{M}\rightarrow M$ a regular covering. If the bisectional
curvature $K_{\mathbb{C}}\geq-2$, then the volume entropy $v$ satisfies
$v\leq2m$. Moreover equality holds iff $M$ is complex hyperbolic (normalized
to have constant holomorphic sectional curvature $-4$).
\end{theorem}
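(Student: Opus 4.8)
The plan is to adapt the real hyperbolic argument (Theorem~\ref{Rig}) to the Kähler setting, with the key modification being the replacement of the ordinary Laplacian comparison by a comparison adapted to the complex structure. The strategy rests on the integral formula from Theorem~\ref{div}: applying it to a cleverly chosen vector field $Y$ will produce an inequality relating $v$ to a geometric quantity controlled by the bisectional curvature bound $K_{\mathbb{C}} \geq -2$. The natural candidate is $Y = \nabla^{\mathcal W}\xi$ itself, which yields $\int \mathrm{div}^{\mathcal W}(\nabla^{\mathcal W}\xi)\,d\nu = v\int |\nabla^{\mathcal W}\xi|^2\,d\nu = v$, since the leafwise Busemann gradient has length one. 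Thus the whole matter reduces to estimating $\mathrm{div}^{\mathcal W}(\nabla^{\mathcal W}\xi) = \Delta^{\mathcal W}\xi$, the leafwise Laplacian of the Busemann function, from above by $2m$.

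**The heart of the matter** is the correct Laplacian comparison for complex Busemann functions. In the real case one bounds $\Delta B_\xi \leq (n-1)$ using $\mathrm{Ric}\geq -(n-1)$. In the Kähler case the sharp bound for a distance-type function built from geodesics is governed by the bisectional curvature rather than the Ricci curvature, reflecting the fact that the radial direction and its image under the complex structure $J$ behave differently from the remaining $2m-2$ directions. First I would establish, along smooth geodesics in $\widetilde M$, that the Laplacian of the distance function $r(x)=d(o,x)$ satisfies $\Delta r \leq 2\coth(2t) + (2m-2)\coth(t)$ under $K_{\mathbb{C}}\geq -2$, the model being complex hyperbolic space; letting $t\to\infty$ gives the asymptotic bound $\Delta B_\xi \leq 2 + (2m-2) = 2m$ for the Busemann function. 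This comparison is classical in the complex hyperbolic model but must be set up carefully using the second variation of arc length together with the curvature hypothesis on the $J$-invariant planes.

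**The main obstacle** is that, exactly as in Theorem~\ref{Rig}, the Busemann functions on a general Kähler manifold are only Lipschitz rather than smooth, so the pointwise inequality $\Delta^{\mathcal W}\xi \leq 2m$ cannot be invoked directly. I would handle this by working with the leafwise distributional Laplacian and the structure of the measure $\nu$ supplied by Theorem~\ref{div}, approximating $B_\xi$ by smooth functions and passing to the limit in the integral formula; the integral against $\nu$ smooths out the singularities of $\xi$, which is precisely why the formula is stated as an integral identity. Combining $v \leq 2m$ with the equality discussion requires that the comparison be saturated $\nu$-almost everywhere, forcing the radial curvature to equal that of complex hyperbolic space along $\nu$-almost every geodesic; a rigidity and continuity argument then upgrades this to $K_{\mathbb{C}}\equiv -4$ on the holomorphic planes, i.e.\ $\widetilde M$ is complex hyperbolic.

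**For the rigidity direction**, the equality case demands tracking the Hessian comparison sharply: equality in the Laplacian bound throughout forces all the relevant Jacobi fields to coincide with the complex hyperbolic model, which pins down the full curvature tensor. I expect the technical heart of the rigidity to lie in showing that $\nu$ has full support in the relevant sense, so that almost-everywhere equality propagates to an everywhere statement, and in verifying that the limiting Hessian identities integrate cleanly against the leafwise measure without boundary terms.
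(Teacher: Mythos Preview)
Your outline for the inequality $v\le 2m$ is essentially the paper's, modulo one technical point: you cannot plug $Y=\nabla^{\mathcal W}\xi$ directly into Theorem~\ref{div}, since $\xi$ is only Lipschitz and hence $\nabla^{\mathcal W}\xi$ is not $C^{1}$ along leaves. The paper (in Section~3, which the K\"ahler case reuses verbatim) smooths by the heat semigroup, taking $Y_{t}=\nabla(P_{t}\xi)$, and then passes to the limit after an integration by parts against a partition of unity; you gesture at this but should be aware that the limiting argument is arranged so that one lands on the distributional statement $\int e^{-v\xi}\Delta\widetilde\chi_{i}\,dx=0$, from which elliptic regularity produces a \emph{single smooth} horofunction $\xi$ with $\Delta\xi=2m$ and $|\nabla\xi|=1$. (The paper actually takes the inequality itself from the Li--Wang volume comparison and uses the integral formula only for rigidity, but your route to the inequality would also work once the smoothing is in place.)

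The genuine gap is in your rigidity step. You propose that equality in the Laplacian comparison $\nu$-a.e.\ forces the Jacobi fields, hence the curvature, to match the complex hyperbolic model along $\nu$-a.e.\ ray, and then invoke an unspecified ``full support'' argument. Neither piece is available: equality in $\Delta\xi\le 2m$ at a point does not by itself pin down the Hessian (you need the full Hessian comparison, not just its trace), and nothing in the construction guarantees that $\nu$ sees enough directions to propagate a.e.\ statements to the whole manifold. The paper sidesteps both issues. Having extracted one smooth $\xi$ with $\Delta\xi=2m$ and $|\nabla\xi|=1$, it combines the Bochner formula (using $\mathrm{Ric}\ge -2(m+1)$, a consequence of $K_{\mathbb C}\ge -2$) with the Li--Wang \emph{Hessian} comparison applied to the support functions $u_{s}(x)=\xi(p)+d(x,\gamma(s))-s$ from Lemma~\ref{BF}, and derives the overdetermined system
\[
\xi_{i\bar\jmath}=\delta_{ij},\qquad \xi_{ij}=-2\,\xi_{i}\xi_{j}
\]
in a local unitary frame. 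This system is exactly the input to the Li--Wang splitting theorem \cite{LW1}, which gives the warped-product structure $g=dt^{2}+e^{-4t}\theta_{0}^{2}+e^{-2t}\sum\theta_{i}^{2}$ and, using simple connectivity and bounded curvature, identifies $\widetilde M$ with $\mathbb{CH}^{m}$. Your sketch misses this overdetermined-system/splitting mechanism, which is the real content of the K\"ahler rigidity.
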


To clarify the statement, the condition $K_{\mathbb{C}}\geq-2$ means that for
any two vectors $X,Y$%
\[
R\left(  X,Y,X,Y\right)  +R\left(  X,JY,X,JY\right)  \geq-2\left(  \left\vert
X\right\vert ^{2}\left\vert Y\right\vert ^{2}+\left\langle X,Y\right\rangle
^{2}+\left\langle X,JY\right\rangle ^{2}\right)  ,
\]
where $J$ is the complex structure.

In the quaternionic K\"{a}hler case we have

\begin{theorem}
\label{QK}Let $M$ be a compact quaternionic K\"{a}hler manifold of $\dim=4m$
with $m\geq2$ and scalar curvature $-16m\left(  m+2\right)  $. Let
$\pi:\widetilde{M}\rightarrow M$ be a regular covering. Then the volume
entropy $v$ satisfies $v\leq2\left(  2m+1\right)  $. Moreover equality holds
iff $M$ is quaternionic hyperbolic.
\end{theorem}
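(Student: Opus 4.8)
The plan is to establish Theorem \ref{QK} by adapting the strategy used for the Riemannian case (Theorem \ref{Rig}), with the main structural input being the integral formula of Theorem \ref{div}. First I would apply Theorem \ref{div} to a cleverly chosen vector field $Y$ on the laminated space $X_{M}$. The natural candidate is $Y=\nabla^{\mathcal{W}}\xi$ itself (the laminated gradient of the Busemann function), for which $\langle Y,\nabla^{\mathcal{W}}\xi\rangle=\left\vert\nabla^{\mathcal{W}}\xi\right\vert^{2}=1$, so the right-hand side of the formula collapses to $v$. The left-hand side then becomes $\int\Delta^{\mathcal{W}}\xi\,d\nu$, the integrated laminated Laplacian of the Busemann function. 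The heart of the argument is therefore to bound $\Delta^{\mathcal{W}}\xi$ pointwise from above by $2(2m+1)$ using the curvature hypothesis, which yields $v\leq 2(2m+1)$.

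The curvature bound on the Laplacian of the Busemann function is where the quaternionic K\"ahler structure enters, and I expect this to be the main obstacle. In the real hyperbolic case one compares $\Delta B_{\xi}$ with the mean curvature of horospheres via the Riccati (matrix) equation $U'+U^{2}+R=0$ along the geodesic, where $U$ is the second fundamental form and $R$ is the curvature operator; the bound $Ric\geq-(n-1)$ forces $\mathrm{tr}\,U=\Delta B_{\xi}\leq n-1$. For the quaternionic case I would run the same Riccati comparison but exploit the special algebraic structure: the holonomy reduction to $Sp(m)\cdot Sp(1)$ means the curvature tensor decomposes, and the three complex structures $I,J,K$ pick out distinguished directions along the geodesic. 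The scalar curvature normalization $-16m(m+2)$ is precisely what makes the model quaternionic hyperbolic space $\mathbb{HH}^{m}$ have horospheres of constant mean curvature $2(2m+1)$, split as $2(2m-1)+3\cdot 2$, reflecting the $(4m-1)$-dimensional horosphere with one ``radial'' quaternionic line contributing a different eigenvalue. The comparison must be performed directionally rather than just on the trace, tracking the three distinguished planes spanned by $I\gamma',J\gamma',K\gamma'$ separately from the orthogonal complement, so the quaternionic analogue of the scalar Riccati argument requires the full matrix comparison adapted to this holonomy splitting.

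For the equality case, I would argue that $v=2(2m+1)$ forces $\Delta^{\mathcal{W}}\xi=2(2m+1)$ $\nu$-almost everywhere, which in turn forces equality in every step of the Riccati comparison. This rigidity propagates from the stable Jacobi fields along almost every geodesic to the full curvature tensor: equality in the comparison pins down the eigenvalues of the second fundamental form of every horosphere (namely $1$ on the $(4m-4)$-dimensional quaternionic-orthogonal part and $2$ on the three distinguished directions), and via the Riccati equation this determines $R$ completely, identifying it with the curvature tensor of $\mathbb{HH}^{m}$. The delicate point, exactly as flagged in the remarks following Theorem \ref{Rig}, is that Busemann functions are only Lipschitz a priori, so the pointwise Riccati analysis is valid only along geodesics where $B_{\xi}$ is $C^{2}$; the measure $\nu$ and the divergence formulation of Theorem \ref{div} are what allow one to make the curvature conclusion rigorous on a full-measure set and then upgrade it to an everywhere statement by continuity, finally invoking the classification of quaternionic K\"ahler manifolds with the model curvature tensor to conclude that $\widetilde{M}$ is isometric to quaternionic hyperbolic space.
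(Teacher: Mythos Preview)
Your overall strategy---Laplacian comparison for Busemann functions via a Riccati-type analysis adapted to the quaternionic holonomy, combined with Theorem~\ref{div} for the rigidity---is headed in the right direction, and the comparison input you describe is essentially the content of Kong--Li--Zhou \cite{KLZ}. However, there is a genuine gap in how you invoke Theorem~\ref{div}.

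You propose to take $Y=\nabla^{\mathcal{W}}\xi$ and read off $v=\int\Delta^{\mathcal{W}}\xi\,d\nu$. But Theorem~\ref{div} requires $Y$ to be continuous and $C^{1}$ along the leaves, and since $\xi$ is only Lipschitz, $\nabla^{\mathcal{W}}\xi$ fails both hypotheses; indeed $\Delta^{\mathcal{W}}\xi$ is not even a function. You acknowledge this (``Busemann functions are only Lipschitz a priori''), but your proposed remedy---work on a full-measure set and then upgrade by continuity---does not close the gap: there is no a priori continuity of $D^{2}\xi$ to invoke, and your Riccati equality analysis already presupposes the $C^{2}$-regularity that is the very thing to be established.

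The paper's route around this is different in substance, not just in presentation. From the comparison theorem (in the barrier sense, which \emph{is} valid for Lipschitz $\xi$) one gets $\Delta\xi\leq 2(2m+1)$ and hence $\Delta\bigl(e^{-2(2m+1)\xi}\bigr)\geq 0$ \emph{in the sense of distributions}. Theorem~\ref{div} is then applied not to $\nabla^{\mathcal{W}}\xi$ but to the heat-regularized field $Y_{t}=\nabla(P_{t}\xi)$, which \emph{is} smooth along leaves. After unwinding with a partition of unity and letting $t\to 0$, one finds that the nonnegative distribution $\Delta e^{-v\xi}$ pairs to zero against a generating family of test functions when $v=2(2m+1)$, hence vanishes identically. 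Elliptic regularity now makes $e^{-v\xi}$, and therefore $\xi$, genuinely smooth with $\Delta\xi=2(2m+1)$ and $|\nabla\xi|=1$. Only \emph{after} this smoothness is secured does one run a Bochner-type argument to obtain an overdetermined system for $D^{2}\xi$; the identification of $\widetilde{M}$ with $\mathbb{HH}^{m}$ then comes from the structural results of \cite{KLZ}, parallel to the use of Li--Wang in the K\"ahler case, rather than from a direct curvature-tensor computation along Riccati equality.
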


The paper is organized as follows. In Section 2, we discuss the Busemann
compactification and construct the Patterson-Sullivan measure and prove
Theorem \ref{div}. Theorem \ref{Rig} will be proved in Section 3. We will
discuss the K\"{a}hler case and the quaternionic K\"{a}hler case in Section 4.

\section{Construction of the measure}

Let $\widetilde{M}$ be a noncompact, complete Riemannian manifold. Fix a point
$o\in\widetilde{M}$ and define, for $x\in\widetilde{M}$ the function $\xi
_{x}(z)$ on $\widetilde{M}$ by:
\[
\xi_{x}(z)\;=\;d(x,z)-d(x,o).
\]
The assignment $x\mapsto\xi_{x}$ is continuous, one-to-one and takes values in
a relatively compact set of functions for the topology of uniform convergence
on compact subsets of $\widetilde{M}$. The Busemann compactification
$\widehat{M}$ of $\widetilde{M}$ is the closure of $\widetilde{M}$ for that
topology. The space $\widehat{M}$ is a compact separable space. The
\textit{Busemann boundary } $\partial\widehat{M}:=\widehat{M}\setminus
\widetilde{M}$ is made of Lipschitz continuous functions $\xi$ on
$\widetilde{M}$ such that $\xi(o)=0$. Elements of $\partial\widehat{M}$ are
called \textit{horofunctions}.

First we collect some general facts about horofunctions, see e.g. \cite{SY,
Pe}. Suppose $\xi\in\widehat{M}$ is the limit of $\left\{  a_{k}\right\}
\subset\widetilde{M}$ with $d\left(  o,a_{k}\right)  \rightarrow\infty$, i.e.
\begin{equation}
\xi\left(  x\right)  =\lim_{k\rightarrow\infty}f_{k}\left(  x\right)  ,
\label{lim}%
\end{equation}
where $f_{k}\left(  x\right)  =\xi_{a_{k}}\left(  x\right)  =d\left(
x,a_{k}\right)  -d\left(  o,a_{k}\right)  $. The convergence is uniform over
compact sets. We fix a point $p\in\widetilde{M}$ and for each $k$ let
$\gamma_{k}$ be a minimizing geodesic from $p$ to $a_{k}$. Passing to a
subsequence, we can assume that $\gamma_{k}$ converges to a geodesic ray
$\gamma$ starting from $p$. Let $b_{\gamma}$ be the Busemann function
associated to $\gamma$, i.e. $b_{\gamma}\left(  x\right)  =\lim_{s\rightarrow
+\infty}d\left(  x,\gamma\left(  s\right)  \right)  -s$.

\begin{lemma}
\label{BF}\bigskip We have

\begin{enumerate}
\item $\xi\circ\gamma\left(  s\right)  =\xi\left(  p\right)  -s$ for $s\geq0$;

\item $\xi\left(  x\right)  \leq\xi\left(  p\right)  +d\left(  x,\gamma\left(
s\right)  \right)  -s$ for $s\geq0$;

\item $\xi\left(  x\right)  \leq\xi\left(  p\right)  +b_{\gamma}\left(
x\right)  $.
\end{enumerate}
\end{lemma}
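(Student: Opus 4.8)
The plan is to establish the three assertions in order, treating (1) as the only substantive point and deriving (2) and (3) from it by soft limiting arguments together with the fact that every horofunction is $1$-Lipschitz. Before starting I would record two elementary facts. First, each approximant $f_k(x)=d(x,a_k)-d(o,a_k)$ is $1$-Lipschitz by the triangle inequality, so its pointwise limit $\xi$ is $1$-Lipschitz as well. Second, since $d(o,a_k)\to\infty$ while $d(o,p)$ is fixed, we have $d(p,a_k)\geq d(o,a_k)-d(o,p)\to\infty$; hence for any fixed $s\geq0$ the point $\gamma(s)$ and its approximants $\gamma_k(s)$ (the point at arclength $s$ along the minimizing geodesic $\gamma_k$) are all defined once $k$ is large. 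I would also note that, as a limit of the minimizing segments $\gamma_k|_{[0,s]}$, the ray $\gamma$ is itself minimizing, so $d(p,\gamma(s))=s$.

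For (1), the idea is to exploit that $\gamma_k$ is a shortest path from $p$ to $a_k$: whenever $s\leq d(p,a_k)$ the point $\gamma_k(s)$ lies on that path, so $d(\gamma_k(s),a_k)=d(p,a_k)-s$, giving
\[
f_k(\gamma_k(s))=d(\gamma_k(s),a_k)-d(o,a_k)=f_k(p)-s.
\]
I would then pass to the limit $k\to\infty$. The right-hand side converges to $\xi(p)-s$ since $f_k(p)\to\xi(p)$. For the left-hand side I would split
\[
f_k(\gamma_k(s))-\xi(\gamma(s))=\bigl[f_k(\gamma_k(s))-f_k(\gamma(s))\bigr]+\bigl[f_k(\gamma(s))-\xi(\gamma(s))\bigr],
\]
where the first bracket is bounded by $d(\gamma_k(s),\gamma(s))\to0$ because $f_k$ is $1$-Lipschitz and $\gamma_k\to\gamma$, and the second tends to $0$ by uniform convergence of $f_k$ to $\xi$ on the compact set containing these points. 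This yields $\xi(\gamma(s))=\xi(p)-s$, which is (1). The main (though mild) obstacle is precisely this interchange of the two limits, in $k$ and along the geodesic; the equi-Lipschitz property is exactly what lets it go through.

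Statement (2) is then immediate from (1) and the $1$-Lipschitz bound on $\xi$: for $s\geq0$,
\[
\xi(x)\leq\xi(\gamma(s))+d(x,\gamma(s))=\xi(p)-s+d(x,\gamma(s)).
\]
Finally, for (3) I would let $s\to\infty$ in (2). The function $s\mapsto d(x,\gamma(s))-s$ is nonincreasing, since for $s'\geq s$ one has $d(x,\gamma(s'))\leq d(x,\gamma(s))+d(\gamma(s),\gamma(s'))\leq d(x,\gamma(s))+(s'-s)$, and it is bounded below by $-d(x,p)$ because $d(x,\gamma(s))\geq d(p,\gamma(s))-d(x,p)=s-d(x,p)$; hence the limit $b_\gamma(x)$ exists. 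Passing to the limit in (2) gives $\xi(x)\leq\xi(p)+b_\gamma(x)$, which is (3).
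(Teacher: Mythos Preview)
Your proof is correct and follows essentially the same approach as the paper. The only cosmetic difference is in part~(1): the paper evaluates $f_k$ at the fixed point $\gamma(s)$, obtains the one-sided inequality $\xi(\gamma(s))\leq\xi(p)-s$ via the triangle inequality, and then gets the reverse from the $1$-Lipschitz bound, whereas you evaluate $f_k$ at $\gamma_k(s)$ to get an exact identity and pass to the limit; for (2) the paper works with the approximants $f_k$ rather than directly with $\xi$, but this amounts to the same Lipschitz estimate you use.
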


\begin{proof}
For any $s>0$ and $\varepsilon>0$ we have $d\left(  \gamma_{k}\left(
s\right)  ,\gamma\left(  s\right)  \right)  \leq\varepsilon$ for $k$
sufficiently large. Then
\begin{align*}
f_{k}\circ\gamma\left(  s\right)  -f_{k}\left(  p\right)   &  =d\left(
\gamma\left(  s\right)  ,a_{k}\right)  -d\left(  p,a_{k}\right) \\
&  =d\left(  \gamma\left(  s\right)  ,a_{k}\right)  -d\left(  \gamma
_{k}\left(  s\right)  ,a_{k}\right)  +d\left(  \gamma_{k}\left(  s\right)
,a_{k}\right)  -d\left(  p,a_{k}\right) \\
&  \leq d\left(  \gamma\left(  s\right)  ,\gamma_{k}\left(  s\right)  \right)
+d\left(  \gamma_{k}\left(  s\right)  ,a_{k}\right)  -d\left(  p,a_{k}\right)
\\
&  =d\left(  \gamma\left(  s\right)  ,\gamma_{k}\left(  s\right)  \right)
-s\\
&  \leq\varepsilon-s.
\end{align*}
Taking limit yields $\xi\circ\gamma\left(  s\right)  -\xi\left(  p\right)
\leq\varepsilon-s$. Hence $\xi\circ\gamma\left(  s\right)  \leq\xi\left(
p\right)  -s$. On the other hand we have the reversed inequality $\xi
\circ\gamma\left(  s\right)  \geq\xi\left(  p\right)  -s$ as $\xi$ is
Lipschitz with Lipschitz constant $1$.

To prove the second part, we have for $s\geq0$%
\begin{align*}
f_{k}\left(  x\right)   &  =d\left(  x,a_{k}\right)  -d\left(  o,a_{k}\right)
\\
&  \leq d\left(  x,\gamma\left(  s\right)  \right)  +d\left(  a_{k}%
,\gamma\left(  s\right)  \right)  -d\left(  o,a_{k}\right)  .
\end{align*}
Letting $k\rightarrow\infty$ yields%
\begin{align*}
\xi\left(  x\right)   &  \leq d\left(  x,\gamma\left(  s\right)  \right)
+\xi\circ\gamma\left(  s\right) \\
&  =d\left(  x,\gamma\left(  s\right)  \right)  -s+\xi\left(  p\right)  .
\end{align*}
Taking limit as $s\rightarrow\infty$ yields the third part.
\end{proof}

It follows that if $\xi$ is differentiable at $x$, then $\left\vert \nabla
\xi\left(  x\right)  \right\vert =1$. Therefore $\left\vert \nabla
\xi\right\vert =1$ almost everywhere on $\widetilde{M}$.

\begin{proposition}
$\widetilde{M}$ is open in its Busemann compactification $\widehat{M}$. Hence
the Busemann boundary $\partial\widehat{M}$ is compact.
\end{proposition}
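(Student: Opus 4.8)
The plan is to deduce compactness of $\partial\widehat{M}$ from compactness of $\widehat{M}$ (already established) by showing that $\widetilde{M}$ is \emph{open} in $\widehat{M}$; then $\partial\widehat{M}=\widehat{M}\setminus\widetilde{M}$ is closed in a compact space, hence compact. The geometric idea is that the embedded functions $\xi_{x}(z)=d(x,z)-d(x,o)$ can be told apart from horofunctions by their behavior near their minimum: $\xi_{x}$ attains its minimum $-d(x,o)$ at $z=x$ and is \emph{radial} around $x$, taking the constant value $-d(x,o)+r$ on the entire sphere $S(x,r)=\{z:d(x,z)=r\}$, whereas a horofunction decreases to $-\infty$ at unit rate along a suitable ray by Lemma \ref{BF}(1). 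I will turn this dichotomy into an open condition.

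First I would record that every $\xi\in\partial\widehat{M}$ is genuinely a horofunction to which Lemma \ref{BF} applies, i.e.\ a limit $\xi=\lim_{k}\xi_{a_{k}}$ with $d(o,a_{k})\to\infty$. Indeed, if $\{a_{k}\}$ had a bounded subsequence, then by completeness and Hopf--Rinow (closed balls are compact) it would subconverge to some $x$, and continuity of $x\mapsto\xi_{x}$ would give $\xi=\xi_{x}\in\widetilde{M}$, a contradiction. This also supplies, for each $\xi\in\partial\widehat{M}$ and any chosen base point $p$, a unit-speed ray $\gamma$ from $p$ with $\xi(\gamma(s))=\xi(p)-s$.

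The main step is to exhibit, around an arbitrary point $\xi_{x_{0}}\in\widetilde{M}$, a basic neighborhood disjoint from $\partial\widehat{M}$. Fix $r>0$, let $K=\overline{B(x_{0},r)}$ (compact by Hopf--Rinow), and set $U=\{\xi\in\widehat{M}:\sup_{z\in K}|\xi(z)-\xi_{x_{0}}(z)|<r/2\}$, an open neighborhood of $\xi_{x_{0}}$. Suppose for contradiction that some horofunction $\xi\in U$ exists. Applying Lemma \ref{BF} with base point $p=x_{0}$ yields a ray $\gamma$ from $x_{0}$ with $\xi(\gamma(r))=\xi(x_{0})-r$; since $\gamma$ is a unit-speed minimizing ray, $\gamma(r)\in S(x_{0},r)\subset K$. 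On one hand, using $|\xi(x_{0})-\xi_{x_{0}}(x_{0})|<r/2$ gives $\xi(\gamma(r))=\xi(x_{0})-r<\xi_{x_{0}}(x_{0})-r/2$. On the other hand, radiality gives $\xi_{x_{0}}(\gamma(r))=\xi_{x_{0}}(x_{0})+r$, and $|\xi(\gamma(r))-\xi_{x_{0}}(\gamma(r))|<r/2$ forces $\xi(\gamma(r))>\xi_{x_{0}}(x_{0})+r/2$. These two inequalities are incompatible, so $U\cap\partial\widehat{M}=\emptyset$ and $U\subset\widetilde{M}$, proving openness.

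I expect the crux to be this last localization. The fact that a horofunction is unbounded below is a \emph{global} statement, useless by itself for an open (hence local) conclusion; the point is that Lemma \ref{BF} lets me realize the descent on a ray of any fixed length $r$, so that the mismatch of order $r$ between the horofunction's value $\xi(x_{0})-r$ and the radial value $\xi_{x_{0}}(x_{0})+r$ is already visible on the fixed compact set $K$, where uniform convergence is being measured. Minor points to check are that the ray is unit-speed and minimizing so that $\gamma(r)$ really lies on $S(x_{0},r)$, and that rays exist at all, which is guaranteed by noncompactness of $\widetilde{M}$.
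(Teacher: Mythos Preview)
Your proof is correct and rests on the same key observation as the paper's: by Lemma~\ref{BF}(1) a horofunction decreases at unit rate along some ray from the chosen base point, whereas $\xi_{x_0}$ \emph{increases} at unit rate along every ray from $x_0$. The paper packages this as a direct contradiction---assuming $\xi_p=\lim\xi_{a_k}$ with $d(o,a_k)\to\infty$, Lemma~\ref{BF}(1) gives $\xi_p(\gamma(s))=\xi_p(p)-s=-d(o,p)-s$, while the explicit formula gives $\xi_p(\gamma(s))=s-d(o,p)$---whereas you quantify the same $2r$ mismatch on the compact ball $\overline{B(x_0,r)}$ to exhibit an explicit basic neighborhood of $\xi_{x_0}$ disjoint from $\partial\widehat{M}$; these are minor variations on one idea, with your version having the small advantage of sidestepping the implicit diagonalization (from ``$\xi_p$ is a limit of horofunctions'' to ``$\xi_p=\lim\xi_{a_k}$ with $a_k\to\infty$'') that the paper's phrasing requires.
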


\begin{proof}
Suppose otherwise and $p\in\widetilde{M}$ is the limit of a sequence $\left\{
a_{k}\right\}  \subset\widetilde{M}$ with $d\left(  o,a_{k}\right)
\rightarrow\infty$, i.e. $\xi_{p}\left(  x\right)  =\lim_{k\rightarrow\infty
}\xi_{a_{k}}\left(  x\right)  $ and the convergence is uniform over compact
sets. Then by Lemma \ref{BF} there is a geodesic ray $\gamma$ starting from
$p$ s.t. $\xi_{p}\circ\gamma\left(  s\right)  =\xi_{p}\left(  p\right)
-s=-s-d\left(  o,p\right)  $ for $s\geq0$. But
\begin{align*}
\xi_{p}\circ\gamma\left(  s\right)   &  =d\left(  \gamma\left(  s\right)
,p\right)  -d\left(  o,p\right) \\
&  =s-d\left(  o,p\right)  ,
\end{align*}
Clearly a contradiction.
\end{proof}

We now further assume that $\widetilde{M}$ is a regular Riemannian covering of
a compact manifold $M$, i.e. $\widetilde{M}$ is a Riemannian manifold and
there is a discrete group $G$ of isometries of $\widetilde{M}$ acting freely
and such that the quotient $M=\widetilde{M}/G$ is a compact manifold. The
quotient metric makes $M$ a compact Riemannian manifold. We recall the
construction of the laminated space $X_{M}$ (\cite{L1}). Observe that we may
extend by continuity the action of $G$ from $\widetilde{M}$ to $\widehat{M}$,
in such a way that for $\xi$ in $\widehat{M}$ and $g$ in $G$,
\[
g.\xi(z)\;=\;\xi(g^{-1}z)-\xi(g^{-1}o).
\]
We define now the \textit{horospheric suspension } $X_{M}$ of $M$ as the
quotient of the space $\widetilde{M}\times\widehat{M}$ by the diagonal action
of $G$. The projection onto the first component in $\widetilde{M}%
\times\widehat{M}$ factors into a projection from $X_{M}$ to $M$ so that the
fibers are isometric to $\widehat{M}$. It is clear that the space $X_{M}$ is
metric compact. If $M_{0}\subset\widetilde{M}$ is a fundamental domain for
$M$, one can represent $X_{M}$ as $M_{0}\times\widehat{M}$ in a natural way.

To each point $\xi\in\widehat{M}$ is associated the projection $W_{\xi}$ of
$\widetilde{M} \times\{\xi\}$. As a subgroup of $G$, the stabilizer $G_{\xi}$
of the point $\xi$ acts discretely on $\widetilde{M}$ and the space $W_{\xi}$
is homeomorphic to the quotient of $\widetilde{M}$ by $G_{\xi}$. We put on
each $W_{\xi}$ the smooth structure and the metric inherited from
$\widetilde{M}$. The manifold $W_{\xi}$ and its metric vary continuously on
$X_{M}$. The collection of all $W_{\xi}, \xi\in\widehat{M}$ form a continuous
lamination $\mathcal{W}_{M}$ with leaves which are manifolds locally modeled
on $\widetilde{M}$. In particular, it makes sense to differentiate along the
leaves of the lamination and we denote $\nabla^{\mathcal{W}} $ and
$\mathrm{div}^{\mathcal{W}}$ the associated gradient and divergence operators:
$\nabla^{\mathcal{W}}$ acts on continuous functions which are $C^{1}$ along
the leaves of $\mathcal{W}$, $\mathrm{div}^{\mathcal{W}}$ on continuous vector
fields in $T\mathcal{W}$ which are of class $C^{1} $ along the leaves of
$\mathcal{W}$. We want to construct a measure on $X_{M}$ which would behave as
the Knieper measure $\nu= \int_{M} \left(  \int_{\partial\widetilde{M}}
d\nu_{x} \left(  \xi\right)  \right)  dx $ in the negatively curved case. The
construction follows Patterson's in the Fuchsian case.

Let $v$ be the volume entropy of $\widetilde{M}$%
\[
v=\lim_{r\rightarrow\infty}\frac{\ln\mathrm{vol}B_{\widetilde{M}}\left(
x,r\right)  }{r},
\]
where $B_{\widetilde{M}}\left(  x,r\right)  $ is the ball of radius $r$
centered at $x$ in $\widetilde{M}$.

Let us consider the Poincar\'{e} series of $\widetilde{M}$:%
\[
P(s):=\sum_{g\in G}e^{-sd\left(  o,go\right)  }.
\]

\begin{proposition}
The series $P(s)$ converges for $s>v$, diverges to $+\infty$ for $s<v$.
\end{proposition}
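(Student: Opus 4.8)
The plan is to reduce the convergence and divergence of $P(s)$ to the exponential growth rate of the orbit-counting function
\[
N(r) \;:=\; \#\{g \in G : d(o,go) \le r\},
\]
and then to identify that growth rate with the volume entropy $v$. First I would record the elementary identity, valid for every $s>0$ with both sides allowed to equal $+\infty$,
\[
P(s) \;=\; s\int_0^\infty e^{-sr}\,N(r)\,dr,
\]
which follows by writing $e^{-s\,d(o,go)}=\int_{d(o,go)}^{\infty}s\,e^{-sr}\,dr$ and applying Tonelli's theorem to the resulting nonnegative sum-integral. This converts the proposition into a purely quantitative statement about the growth of $N(r)$, and sidesteps any convergence subtlety in an integration by parts.

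The heart of the matter is the claim that $\lim_{r\to\infty}\frac{\ln N(r)}{r}=v$, which I would establish by squeezing $N(r)$ between two multiples of the ball volume $V(r):=\mathrm{vol}\,B_{\widetilde M}(o,r)$, using cocompactness. Since $G$ acts freely and properly discontinuously, only finitely many $g$ satisfy $d(o,go)\le 1$, so $\delta:=\inf_{g\ne e}d(o,go)>0$; put $\rho=\delta/2$. The balls $B(go,\rho)$ are then pairwise disjoint, each has volume $c:=\mathrm{vol}\,B(o,\rho)>0$ by isometry invariance, and those with $d(o,go)\le r$ all lie in $B(o,r+\rho)$, giving $N(r)\,c\le V(r+\rho)$. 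Conversely, compactness of $M=\widetilde M/G$ furnishes a relatively compact fundamental domain, hence a constant $D<\infty$ with $Go$ being $D$-dense in $\widetilde M$; then $B(o,r)\subseteq\bigcup_{d(o,go)\le r+D}B(go,D)$, and since each such ball has volume $C:=\mathrm{vol}\,B(o,D)$, we get $V(r)\le N(r+D)\,C$. Combining the two inclusions with the fact that $\lim_{r\to\infty}\frac{\ln V(r)}{r}=v$ yields $\lim_{r\to\infty}\frac{\ln N(r)}{r}=v$.

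Finally I would feed this growth rate into the integral identity. For $s>v$, choose $\e$ with $v+\e<s$; then $N(r)\le e^{(v+\e)r}$ for large $r$, so the integrand $e^{-sr}N(r)$ is eventually dominated by $e^{(v+\e-s)r}$ and the integral, hence $P(s)$, is finite. For $s<v$, choose $\e$ with $s<v-\e$; then $N(r)\ge e^{(v-\e)r}$ for large $r$, so $e^{-sr}N(r)\ge e^{(v-\e-s)r}$ with positive exponent, and the integral diverges to $+\infty$; since every summand of $P(s)$ is positive, $P(s)=+\infty$.

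I expect the main obstacle to be the two-sided comparison between $N(r)$ and $V(r)$ in the second paragraph: one must extract the uniform geometric constants --- the positive packing radius $\rho$ coming from discreteness of the orbit, and the finite covering radius $D$ together with the $D$-density of $Go$ coming from compactness of $M$ --- and verify the packing and covering inclusions carefully. Everything else is the standard manipulation of a Dirichlet-type series. This comparison is exactly where cocompactness of the covering enters, and it is what forces the abscissa of convergence of the Poincar\'e series to coincide with $v$ rather than with some unrelated exponent.
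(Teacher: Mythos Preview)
Your argument is correct, and it shares the geometric heart of the paper's proof: both compare the orbit-counting function $N(r)$ with the ball volume $V(r)$ by packing and covering with a fundamental domain, obtaining two-sided bounds of the form $N(r)\,c\le V(r+\rho)$ and $V(r)\le N(r+D)\,C$. The scaffolding around this core differs in two respects. First, the paper establishes the existence of $\lim_{r\to\infty}\tfrac{1}{r}\ln N(r)$ via a submultiplicativity inequality $\pi(R+S+2D)\le\pi(R+2D)\,\pi(S+2D)$ and Fekete's lemma, and only then identifies the limit with $v$; you skip this detour by squeezing $\tfrac{\ln N(r)}{r}$ directly between quantities already known to tend to $v$, which is shorter since the existence of $\lim\tfrac{\ln V(r)}{r}$ is given. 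Second, the paper simply declares that this growth rate is the critical exponent of $P(s)$, whereas you make the passage explicit through the Laplace-type identity $P(s)=s\int_0^\infty e^{-sr}N(r)\,dr$, which is a cleaner and more self-contained justification. The paper's Fekete step does buy one small extra: since the limit equals the infimum, one gets the uniform lower bound $\pi(R+2D)\ge e^{vR}$ valid for every $R$, not just asymptotically.
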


\begin{proof}
This is classical and for completeness we recall the proof. Take $M_{0}$ a
fundamental domain in $\widetilde{M}$ containing $o$ in its interior, and
positive constants $d,D$ such that $B(o,d)\subset M_{0}\subset B(o,D)$.

Define $\pi(R):=\sharp\left\{  g\in G:d\left(  o,go\right)  \leq R\right\}  $.
We have $\pi(R+S)\leq\pi(R+D)\pi(S+D)$, which implies $\pi(R+S+2D)\leq
\pi(R+2D)\pi(S+2D)$. It follows that the following limit exists
\[
\lim_{R\rightarrow\infty}\frac{1}{R}\ln\pi(R+2D)=\inf_{R}\frac{1}{R}\ln
\pi(R+2D).
\]
The above limit is the critical exponent of the Poincar\'e series. Since
\[
\pi(R)\mathrm{vol}B(o,d)\leq\mathrm{vol}B(o,R+d)\leq\pi(R+D)\mathrm{vol}M ,
\]
the above limit is also $\lim_{R\rightarrow\infty}\frac{\ln\mathrm{vol}%
B_{\widetilde{M}}\left(  x,R\right)  }{R} = v$.
\end{proof}

\bigskip As in the classical case, a distinction has to be made between the
case that $P\left(  s\right)  $ diverges at $v$ and the case that it
converges. The following lemma is due to Patterson \cite{P} (see also \cite{N}).

\begin{lemma}
There exists a function $h:\mathbb{R}^{+}\rightarrow\mathbb{R}^{+}$ which is
continuous, non-decreasing, \ and

\begin{enumerate}
\item the series $P^{\ast}\left(  s\right)  : =\sum_{g\in G}e^{-sd\left(
o,go\right)  }h\left(  e^{d\left(  o,go\right)  }\right)  $ converges for
$s>v$ and diverges for $s\leq v$;

\item if $\varepsilon>0$ is given there exists $r_{0}$ s.t. for $r>r_{0},t>1
$, $h\left(  rt\right)  \leq t^{\varepsilon}h\left(  r\right)  $.
\end{enumerate}
\end{lemma}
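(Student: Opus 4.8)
I need to construct a continuous, nondecreasing function $h:\mathbb{R}^+\to\mathbb{R}^+$ such that the modified Poincaré series $P^*(s)=\sum_{g\in G}e^{-sd(o,go)}h(e^{d(o,go)})$ has critical exponent exactly $v$ but actually *diverges* at the critical value $s=v$, while $h$ grows subexponentially in the precise sense of property (2). This is Patterson's classical trick, and let me think through how I would build it.

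The plan is to split according to the behavior of $P$ at the critical value $s=v$. If $P(v)=\sum_{g\in G}e^{-vd(o,go)}=+\infty$ (the \emph{divergent case}), I would simply take $h\equiv 1$: then $P^{\ast}=P$, so property (1) follows from the preceding Proposition (convergence for $s>v$, divergence for $s<v$) together with the standing assumption $P(v)=+\infty$, and property (2) holds trivially because $h$ is constant. All the real work is therefore confined to the \emph{convergent case} $P(v)<+\infty$, where a nonconstant weight must be manufactured to tip the series from convergence into (barely) divergence at $s=v$ without moving the critical exponent.

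In the convergent case I would exploit not merely that $P(v)<\infty$ but that $P(s)=+\infty$ for \emph{every} $s<v$. Writing $a_{g}=e^{-vd(o,go)}$, the latter says exactly that
\[
\sum_{g\in G}a_{g}\,\bigl(e^{d(o,go)}\bigr)^{1/k}=\sum_{g\in G}e^{-(v-1/k)d(o,go)}=P(v-1/k)=+\infty\qquad(k\ge 1).
\]
Hence for each $k$ I can choose levels $1\le x_{1}<x_{2}<\cdots\to\infty$, successively and each large enough, so that the block sum $\sum_{x_{k}\le e^{d(o,go)}<x_{k+1}}a_{g}\bigl(e^{d(o,go)}\bigr)^{1/k}\ge 1$. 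I would then \emph{define} $h$ to be the power $h(x)=C_{k}x^{1/k}$ on $[x_{k},x_{k+1})$, with $C_{1}=1$ and $C_{k+1}$ fixed by continuity at $x_{k+1}$. Since the exponents $1/k$ decrease, matching forces $C_{k+1}\ge C_{k}$, so $h$ is continuous and nondecreasing; moreover $\log h$ is a concave function of $\log x$ whose slope on $[x_{k},\infty)$ is at most $1/k\to 0$.

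With such $h$ the two properties should follow cleanly. For (2), concavity of $\log h$ in $\log x$ gives $\log h(xt)-\log h(x)\le (1/k)\log t$ once $x\ge x_{k}$, so given $\varepsilon>0$ one takes $k$ with $1/k\le\varepsilon$ and $r_{0}=x_{k}$ to get $h(xt)\le t^{\varepsilon}h(x)$ for $x>r_{0}$, $t>1$. For (1): when $s<v$ we have $h\ge 1$, so $P^{\ast}(s)\ge P(s)=+\infty$; at $s=v$ each block contributes at least $1$ to $P^{\ast}(v)$ by construction, whence $P^{\ast}(v)=+\infty$; and for $s>v$ the subexponential factor $h(e^{d(o,go)})$ is overwhelmed by the extra decay $e^{-(s-v)d(o,go)}$, so the terms are eventually dominated by $C_{k}x_{k}^{-(s-v)/2}a_{g}$ on the $k$-th block, giving convergence. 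The one place that requires genuine care is exactly this convergent case: forcing divergence at $v$ pushes $h$ upward while (2) caps its growth at subexponential rate, and the two are reconciled only because divergence persists for \emph{all} $s<v$, furnishing arbitrarily small exponents $1/k$ that already suffice. The residual bookkeeping is to let $x_{k}\to\infty$ fast enough (e.g.\ $\log x_{k}$ growing superlinearly) that $\log C_{k}=o(\log x_{k})$, which simultaneously keeps $h$ subexponential and makes $\sum_{k}C_{k}x_{k}^{-(s-v)/2}$ converge, securing the estimate for $s>v$ uniformly across blocks.
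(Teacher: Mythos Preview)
The paper does not actually prove this lemma: it attributes the result to Patterson and refers the reader to \cite{P} and \cite{N}. Your argument is the standard Patterson construction and is essentially correct; in particular the piecewise-power definition of $h$, the concavity of $\log h$ in $\log x$, and the block-by-block divergence at $s=v$ are exactly how the proof goes in those references.

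One remark: your final paragraph of ``residual bookkeeping'' is unnecessary. Once property (2) is established (and your concavity argument already gives it, with no growth condition on the $x_k$ beyond $x_k\to\infty$), convergence of $P^{\ast}(s)$ for $s>v$ follows immediately: given $s>v$, set $\varepsilon=(s-v)/2$ and take the corresponding $r_0$; then for $e^{d(o,go)}>r_0$ one has
\[
h\bigl(e^{d(o,go)}\bigr)=h\Bigl(r_0\cdot \tfrac{e^{d(o,go)}}{r_0}\Bigr)\le \Bigl(\tfrac{e^{d(o,go)}}{r_0}\Bigr)^{\varepsilon}h(r_0),
\]
so $e^{-sd(o,go)}h(e^{d(o,go)})\le C\,e^{-\frac{s+v}{2}d(o,go)}$, and $P\bigl(\tfrac{s+v}{2}\bigr)<\infty$ since $\tfrac{s+v}{2}>v$. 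You therefore do not need to impose $\log x_k$ superlinear or control $\sum_k C_k x_k^{-(s-v)/2}$ separately.
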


\bigskip If $P\left(  s\right)  $ diverges at $v$ we will simply take $h$ to
be identically $1$. As a consequence of property (2) above we note that for
$t$ in a bounded interval%
\[
\frac{h\left(  e^{r+t}\right)  }{h\left(  e^{r}\right)  }\rightarrow1
\]
uniformly as $r\rightarrow\infty$.

For $x\in\widetilde{M},s>v$, we define a finite measure $\nu_{x,s}$ by
setting, for all $f$ continuous on $\widehat{M}$,%

\[
\int f(\xi)d\nu_{x,s}(\xi):=\frac{1}{P^{\ast}\left(  s\right)  }\sum_{g\in
G}e^{-sd(x,go)}h\left(  e^{d\left(  x,go\right)  }\right)  f(\xi_{go}).
\]

Clearly, for $g\in G$, $g_{\ast}\nu_{x,s}=\nu_{gx,s}$, so that the measure
$\widetilde{\nu}_{s}:=\int\nu_{x,s}dx$ is $G$-invariant on $\widetilde
{M}\times\widehat{M}$. We write $\nu_{s}$ for the corresponding measure on
$X_{M}=\widetilde{M}\times\widehat{M}/G$. Choose a sequence $s_{k}$ $>v$ and
$s_{k}\rightarrow v$ as $k\rightarrow\infty$ such that the probability
measures $\nu_{o,s_{k}}$ converge towards some probability measure $\nu_{o}$.
Since $\lim_{k\rightarrow\infty}P^{\ast}\left(  s_{k}\right)  =\infty$, the
measure $\nu_{o}$ is supported on $\partial\widehat{M}$.

\begin{proposition}
\bigskip For any $x\in\widetilde{M}$, the measures $\nu_{x,s_{k}}$ converge to
a measure $\nu_{x}$ on $\partial\widehat{M}$. Moreover%
\[
d\nu_{x}\left(  \xi\right)  =e^{-v\xi\left(  x\right)  }d\nu_{o}\left(
\xi\right)  .
\]

\end{proposition}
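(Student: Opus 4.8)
The plan is to compare $\nu_{x,s}$ with $\nu_{o,s}$ directly from the defining sums and then to pass to the limit $s_k\to v$. The starting observation is the exact identity $d(x,go)=d(o,go)+\xi_{go}(x)$, which is just a rewriting of $\xi_{go}(x)=d(x,go)-d(o,go)$. Substituting this into the definition of $\nu_{x,s}$, the weight attached to the atom $\xi_{go}$ in $\nu_{x,s}$ equals the corresponding weight in $\nu_{o,s}$ multiplied by $e^{-s\xi_{go}(x)}\,h(e^{d(o,go)+\xi_{go}(x)})/h(e^{d(o,go)})$. Thus for any $f\in C(\widehat{M})$ I can write $\int f\,d\nu_{x,s}=\frac{1}{P^{\ast}(s)}\sum_{g}e^{-sd(o,go)}h(e^{d(o,go)})\,c_s(g)\,\Phi_s(\xi_{go})$, where $\Phi_s(\xi):=e^{-s\xi(x)}f(\xi)$ and $c_s(g):=h(e^{d(o,go)+\xi_{go}(x)})/h(e^{d(o,go)})$.

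Next I would observe that $\Phi_s$ is a genuine continuous function on $\widehat{M}$, since evaluation $\xi\mapsto\xi(x)$ is continuous for the topology of uniform convergence on compact subsets; moreover $|\xi(x)|\le d(x,o)$ for every $\xi\in\widehat{M}$, because each horofunction is $1$-Lipschitz and vanishes at $o$. Consequently $\Phi_{s_k}\to\Phi_v$ uniformly on $\widehat{M}$ as $s_k\to v$, with $\Phi_v(\xi)=e^{-v\xi(x)}f(\xi)$. Combining uniform convergence of the integrands with the weak convergence $\nu_{o,s_k}\to\nu_o$ then yields $\int\Phi_{s_k}\,d\nu_{o,s_k}\to\int e^{-v\xi(x)}f(\xi)\,d\nu_o(\xi)$. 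Hence it remains only to show that the correction factor $c_s(g)$ may be replaced by $1$ in the limit, i.e. that $\int f\,d\nu_{x,s_k}-\int\Phi_{s_k}\,d\nu_{o,s_k}\to 0$.

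This difference equals $\frac{1}{P^{\ast}(s)}\sum_{g}e^{-sd(o,go)}h(e^{d(o,go)})\bigl(c_s(g)-1\bigr)\Phi_s(\xi_{go})$, and I would bound it by splitting the sum at a radius $R$. The bound $|\xi_{go}(x)|\le d(x,o)$ confines the argument shift of $h$ to a fixed bounded interval, so property (2) of $h$, in the uniform form recorded just before the definition of $\nu_{x,s}$, shows that given $\varepsilon>0$ there is $R$ with $|c_s(g)-1|<\varepsilon$ whenever $d(o,go)>R$; since $\nu_{o,s}$ is a probability measure, the tail part is then at most $\varepsilon\|\Phi_s\|_\infty$. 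The finitely many terms with $d(o,go)\le R$ each carry weight $e^{-sd(o,go)}h(e^{d(o,go)})/P^{\ast}(s)$, which tends to $0$ because $P^{\ast}(s_k)\to\infty$; hence the head part also tends to $0$. Letting $\varepsilon\to 0$ gives the claim, and this identifies $\nu_x$ as the measure with $d\nu_x(\xi)=e^{-v\xi(x)}d\nu_o(\xi)$, supported on $\partial\widehat{M}$ because $\nu_o$ is.

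I expect the main obstacle to be precisely the handling of the correction factor $c_s(g)$: unlike the clean exponential factor $e^{-s\xi_{go}(x)}$, it is not a function of the boundary point $\xi_{go}$ alone but depends explicitly on $d(o,go)$, so it cannot simply be absorbed into the continuous integrand $\Phi_s$. The resolution rests on the slowly varying estimate for $h$ together with the divergence $P^{\ast}(s_k)\to\infty$, which jointly force the correction to disappear in the limit; this is also the point where the careful Patterson-type choice of $h$ is really used.
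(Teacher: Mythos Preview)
Your argument is correct and is essentially the same proof as the paper's. Both rest on the identity $d(x,go)=d(o,go)+\xi_{go}(x)$ to compare the weights at $x$ and at $o$, isolate an $h$-ratio correction factor, and kill that correction by splitting into a finite ``head'' (handled via $P^{\ast}(s_k)\to\infty$) and a ``tail'' (handled via the slowly varying property of $h$). The only cosmetic difference is the direction of the computation: the paper starts from $\int f\,e^{-v\xi(x)}\,d\nu_o$ and rewrites it as $\int f\,e^{(s_k-v)\xi(x)}\,d\nu_{x,s_k}+\varepsilon_k$, carrying the extra factor $e^{(s_k-v)\xi(x)}\to 1$ separately, whereas you start from $\int f\,d\nu_{x,s_k}$ and package the exponential into $\Phi_{s_k}=e^{-s_k\xi(x)}f$, letting uniform convergence $\Phi_{s_k}\to\Phi_v$ absorb that step. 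Your correction factor $c_s(g)$ is the reciprocal of the paper's $h(e^{d(o,go)})/h(e^{d(x,go)})$, and the bound on the tail works the same way in both. (A trivial remark: your $c_s(g)$ does not actually depend on $s$, so the subscript is unnecessary.)
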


\bigskip In particular, for any $g\in G$ we have%
\[
d\left(  g_{\ast}\nu_{o}\right)  \left(  \xi\right)  =d\nu_{go}\left(
\xi\right)  =e^{-v\xi\left(  go\right)  }d\nu_{o}\left(  \xi\right)  , \quad
d\left(  g_{\ast}\nu_{x}\right)  \left(  \xi\right)  =d\nu_{gx}\left(
\xi\right)
\]
and the limit of the measures $\nu_{s_{k}}$ on $X_{M}$ is a measure $\nu$ on
$X_{M}$ which can be written, in the $M_{0}\times\widehat{M}$ representation
of $X_{M}$, as%

\begin{equation}
\nu=e^{-v\xi\left(  x\right)  }d\nu_{o}\left(  \xi\right)  dx \label{abs.cont}%
\end{equation}

\begin{proof}
Observe first that for a fixed $x$, $\nu_{x,s}(\widehat{M}) \leq e^{(v+s)
d(o,x)}$, so that the $\nu_{x,s}$ form a bounded family of measures on
$\widehat{M}$. Let $f$ be a continuous function on $\widehat{M}$. We may write:%

\begin{align*}
&  \int f\left(  \xi\right)  e^{-v\xi\left(  x\right)  }d\nu_{o}\left(
\xi\right) \\
&  =\lim_{k\rightarrow\infty}\int f\left(  \xi\right)  e^{-v\xi\left(
x\right)  }d\nu_{o,s_{k}}\left(  \xi\right) \\
&  =\lim_{k\rightarrow\infty}\frac{1}{P^{\ast}\left(  s_{k}\right)  }
\sum_{g\in G}f\left(  \xi_{go}\right)  e^{-v\left(  d\left(  x,go\right)
-d\left(  o,go\right)  \right)  }e^{-s_{k}d\left(  o,go\right)  }h\left(
e^{d\left(  o,go\right)  }\right) \\
&  =\lim_{k\rightarrow\infty}\frac{1}{P^{\ast}\left(  s_{k}\right)  }
\sum_{g\in G}f\left(  \xi_{go}\right)  e^{\left(  s_{k}-v\right)  \xi
_{go}\left(  x\right)  }\frac{h\left(  e^{d\left(  o,go\right)  }\right)
}{h\left(  e^{d\left(  x,go\right)  }\right)  }e^{-s_{k}d\left(  x,go\right)
}h\left(  e^{d\left(  x,go\right)  }\right) \\
&  =\lim_{k\rightarrow\infty}\left(  \int f\left(  \xi\right)  e^{\left(
s_{k}-v\right)  \xi\left(  x\right)  }d\nu_{x,s_{k}}\left(  \xi\right)  \; +
\; \varepsilon_{k}\right)  ,
\end{align*}
where
\[
\varepsilon_{k}=\frac{1}{P^{\ast}\left(  s_{k}\right)  }\sum_{g\in G}f\left(
x,\xi_{go}\right)  e^{\left(  s_{k}-v\right)  \xi_{go}\left(  x\right)
}\left(  \frac{h\left(  e^{d\left(  o,go\right)  }\right)  }{h\left(
e^{d\left(  x,go\right)  }\right)  }-1\right)  e^{-s_{k}d\left(  x,go\right)
}h\left(  e^{d\left(  x,go\right)  }\right)  .
\]
Suppose $\lim\varepsilon_{k} = 0$. Then, for a fixed $x$, $e^{\left(
s_{k}-v\right)  \xi\left(  x\right)  }$ converges to $1$ and therefore, the
limit exists and is $\int f e^{-v\xi(x)} d\nu_{o}$, as claimed. It only
remains to show that $\lim\varepsilon_{k}=0$. Indeed, for any $\delta>0$ and
any $x$, there exists a finite set $E\subset G$ s.t. for any $g\in G\backslash
E$
\[
\left\vert \frac{h\left(  e^{d\left(  o,go\right)  }\right)  }{h\left(
e^{d\left(  x,go\right)  }\right)  }-1\right\vert <\delta.
\]
Then%
\begin{align*}
\left\vert \varepsilon_{k}\right\vert  &  \leq\frac{1}{P^{\ast}\left(
s_{k}\right)  }\sum_{g\in E} f\left(  \xi_{go}\right)  e^{\left(
s_{k}-v\right)  \xi_{go}\left(  x\right)  }\left\vert \frac{h\left(
e^{d\left(  o,go\right)  }\right)  }{h\left(  e^{d\left(  x,go\right)
}\right)  }-1\right\vert e^{-s_{k}d\left(  x,go\right)  }h\left(  e^{d\left(
x,go\right)  }\right) \\
&  +\delta\frac{1}{P^{\ast}\left(  s_{k}\right)  }\sum_{g\in G\backslash E}
f\left(  \xi_{go}\right)  e^{\left(  s_{k}-v\right)  \xi_{go}\left(  x\right)
}e^{-s_{k}d\left(  x,go\right)  }h\left(  e^{d\left(  x,go\right)  }\right) \\
&  \leq\frac{1}{P^{\ast}\left(  s_{k}\right)  }\sum_{g\in E} f\left(  \xi
_{go}\right)  e^{\left(  s_{k}-v\right)  \xi_{go}\left(  x\right)  }\left\vert
\frac{h\left(  e^{d\left(  o,go\right)  }\right)  }{h\left(  e^{d\left(
x,go\right)  }\right)  }-1\right\vert e^{-s_{k}d\left(  x,go\right)  }h\left(
e^{d\left(  x,go\right)  }\right) \\
&  +\delta\int f\left(  \xi\right)  e^{\left(  s_{k}-v\right)  \xi\left(
x\right)  }d\nu_{x, s_{k}}\left(  \xi\right)  .
\end{align*}
Taking limit yields%
\[
\lim_{k\rightarrow\infty}\left\vert \varepsilon_{k}\right\vert \leq
\delta\|f\|_{\infty}e^{vd(o,x)} .
\]
Therefore $\lim\varepsilon_{k}=0$.
\end{proof}

We can integrate by parts along each $M_{0}\times\left\{  \xi\right\}  $, for
$\nu_{o}$-almost every $\xi$, and get for any function $f$ which is $C^{2}$
along the leaves of the lamination $\mathcal{W}$ and has a support contained
in $M_{0} \times\widehat{M}$:%
\begin{align*}
\int\Delta fd\nu &  =\int\left(  \int_{M_{0}}\Delta^{\mathcal{W}}%
fe^{-v\xi\left(  x\right)  }dx\right)  d\nu_{o}\left(  \xi\right) \\
&  =v\int\left(  \int_{M_{0}}\left\langle \nabla^{\mathcal{W}}f,\nabla
^{\mathcal{W}}\xi\right\rangle e^{-v\xi\left(  x\right)  }dx\right)  d\nu
_{o}\left(  \xi\right) \\
&  =v\int\left\langle \nabla^{\mathcal{W}}f,\nabla^{\mathcal{W}}%
\xi\right\rangle d\nu.
\end{align*}

The integral makes sense because $\nabla^{\mathcal{W}}\xi$ is defined Lebesgue
almost everywhere on the leaves and because, by (\ref{abs.cont}), the measure
$\nu$ has absolutely continuous conditional measures along the leaves
$\mathcal{W}$. By choosing the fundamental domain $M_{0}$, we get the same
formula for any function which is $C^{2}$ along the leaves of the lamination
$\mathcal{W}$ and has a small support. Using a partition of unity on $M$, we
see that for all functions on $X_{M}$ which are $C^{2}$ along the leaves of
the lamination $\mathcal{W}$, we have:%
\[
\int\Delta fd\nu=v\int\left\langle \nabla^{\mathcal{W}}f,\nabla^{\mathcal{W}%
}\xi\right\rangle d\nu.
\]

In the same way, one gets for all continuous functions $f_{1}, f_{2}$ which
are smooth along the leaves of the lamination $\mathcal{W}$:
\[
\int\mathrm{div} ^{\mathcal{W}}(f_{1} \nabla^{\mathcal{W}}f_{2}) d\nu= v \int
f_{1} \left\langle \nabla^{\mathcal{W}}f_{2},\nabla^{\mathcal{W}}%
\xi\right\rangle d\nu.
\]
By approximation, we have for all $\mathcal{W}$ vector field $Y$ which is
$C^{1}$ along the leaves and globally continuous,%

\begin{equation}
\int\mathrm{div}^{\mathcal{W}}Yd\nu=v\int\left\langle Y,\nabla^{\mathcal{W}%
}\xi\right\rangle d\nu. \label{cb}%
\end{equation}

Since the measure $\nu$ gives full measure to $\widetilde{M} \times
\partial\widehat{M} $, Theorem \ref{div} is proven.

\section{The rigidity theorem}

In this section we prove the rigidity theorem.

\begin{theorem}
\label{Rig2}Let $M^{n}$ be a compact Riemannian manifold with $Ric\geq-\left(
n-1\right)  $ and $\pi:\widetilde{M}\rightarrow M$ its universal covering.
Then the volume entropy of $\widetilde{M}$ satisfies $v\leq\left(  n-1\right)
$ and equality holds iff $M$ is hyperbolic.
\end{theorem}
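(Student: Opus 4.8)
The plan is to feed the horofunction gradient into the divergence identity \eqref{cb} and combine it with the Laplacian comparison theorem. Formally taking the $\mathcal{W}$-vector field $Y=\nabla^{\mathcal{W}}\xi$ in \eqref{cb} and using $|\nabla^{\mathcal{W}}\xi|=1$ a.e. gives
\[
\int\Delta^{\mathcal{W}}\xi\,d\nu=v\int\langle\nabla^{\mathcal{W}}\xi,\nabla^{\mathcal{W}}\xi\rangle\,d\nu=v.
\]
Since each horofunction is a limit $\xi=\lim_{k}\xi_{a_{k}}$ of the distance functions $\xi_{a_{k}}(x)=d(x,a_{k})-d(o,a_{k})$ with $d(x,a_{k})\to\infty$, the Laplacian comparison theorem under $Ric\ge-(n-1)$ gives $\Delta\xi_{a_{k}}\le(n-1)\coth d(x,a_{k})$, so in the limit $\Delta^{\mathcal{W}}\xi\le(n-1)$ in the barrier/distributional sense. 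Hence $v=\int\Delta^{\mathcal{W}}\xi\,d\nu\le(n-1)$, recovering the known inequality; the converse direction (hyperbolic $\Rightarrow v=n-1$) is the standard computation on $\mathbb{H}^{n}$.

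For the rigidity, assume $v=n-1$. Then $\int\bigl((n-1)-\Delta^{\mathcal{W}}\xi\bigr)d\nu=0$ with nonnegative integrand, so $\Delta^{\mathcal{W}}\xi=n-1$ for $\nu$-almost every point; by the leafwise absolute continuity \eqref{abs.cont} this says that for $\nu_{o}$-a.e. $\xi$ one has $\Delta\xi=n-1$ Lebesgue-a.e. on the leaf. As $\xi$ is then a weak solution of the elliptic equation $\Delta\xi=n-1$, elliptic regularity upgrades it to a smooth horofunction with $\Delta\xi\equiv n-1$ everywhere on $\widetilde{M}$. Applying Bochner to such a $\xi$, and using $|\nabla\xi|\equiv1$ (hence $\nabla(\Delta\xi)=0$ and $\mathrm{Hess}\,\xi(\nabla\xi,\cdot)=0$, so $\mathrm{Hess}\,\xi$ is a trace-$(n-1)$ symmetric form on $(\nabla\xi)^{\perp}$), yields
\[
0=\tfrac12\Delta|\nabla\xi|^{2}=|\mathrm{Hess}\,\xi|^{2}+Ric(\nabla\xi,\nabla\xi)\ge(n-1)+\bigl(-(n-1)\bigr)=0,
\]
where $|\mathrm{Hess}\,\xi|^{2}\ge n-1$ is Cauchy--Schwarz. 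Equality throughout forces $\mathrm{Hess}\,\xi=g-d\xi\otimes d\xi$ on all of $\widetilde{M}$.

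With one such horofunction in hand, set $t=\xi$ and flow along the unit geodesic field $\nabla\xi$; the level sets $N=\{\xi=\mathrm{const}\}$ are smooth hypersurfaces and the shape-operator identity $\mathrm{Hess}\,\xi=g-d\xi\otimes d\xi$ integrates to a warped-product splitting
\[
\widetilde{M}\;\cong\;\mathbb{R}\times N,\qquad g=dt^{2}+e^{2t}h_{N}.
\]
In this metric every plane containing $\partial_{t}$ has curvature $-1$, while a fiber plane has $K=e^{-2t}K_{h_{N}}-1$. Since $M$ is compact, $\widetilde{M}$ has bounded sectional curvature, and boundedness as $t\to-\infty$ forces $K_{h_{N}}\equiv0$; thus $N$ is flat, $g$ has constant curvature $-1$, and $\widetilde{M}$ is isometric to $\mathbb{H}^{n}$, so $M$ is hyperbolic.

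The main obstacle is the low regularity: because horofunctions are only Lipschitz, the field $\nabla^{\mathcal{W}}\xi$ is merely measurable, so the substitution $Y=\nabla^{\mathcal{W}}\xi$ in \eqref{cb} and the bound $\Delta^{\mathcal{W}}\xi\le n-1$ must be justified distributionally and paired with the absolute continuity of $\nu$ along the leaves (e.g.\ by a leafwise mollification $\xi_{\varepsilon}$, using $\nabla\xi_{\varepsilon}\to\nabla\xi$ in $L^{2}(\nu)$). This is precisely where the equality case demands care: one must convert the $\nu$-a.e. identity $\Delta\xi=n-1$ into a genuinely smooth solution before the Bochner computation and the warped-product splitting become legitimate.
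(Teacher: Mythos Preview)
Your outline is correct and matches the paper's strategy: feed (a smoothing of) $\nabla^{\mathcal{W}}\xi$ into the divergence identity, invoke Laplacian comparison, and in the equality case obtain a smooth horofunction on which the Bochner/warped-product/bounded-curvature argument runs exactly as you wrote. For the regularity obstacle you flag, the paper uses the heat-kernel smoothing $Y_{t}=\nabla(P_{t}\xi)$ (automatically $G$-equivariant, hence a legitimate vector field on $X_{M}$), integrates by parts against a finite partition of unity on $M$ \emph{before} sending $t\to 0$, and casts the equality case as the distributional harmonicity $\Delta\bigl(e^{-(n-1)\xi}\bigr)=0$ (via Proposition~\ref{subh}) rather than $\Delta\xi=n-1$; the two formulations are equivalent once $\xi$ is smooth, but the subharmonic/harmonic version is what makes the limit and the elliptic-regularity step clean.
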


Observe that this proves Theorem \ref{Rig}, since the volume entropy of the
universal covering is not smaller than the volume entropy of an intermediate
covering space. First we have

\begin{proposition}
\label{subh}For any $\xi\in\partial\widehat{M}$ we have $\Delta\left(
e^{-\left(  n-1\right)  \xi}\right)  \geq0$ in the sense of distribution.
\end{proposition}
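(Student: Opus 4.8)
The plan is to show that $u := e^{-(n-1)\xi}$ is subharmonic by producing, at every point $p \in \widetilde{M}$, a family of smooth lower barriers whose Laplacians at $p$ tend to a nonnegative limit. The starting point is Lemma \ref{BF}: if $\gamma$ is the geodesic ray from $p$ associated to $\xi$ as in that lemma, then part (2) gives, for each $s \ge 0$,
\[
h_s(x) := \xi(p) + d(x, \gamma(s)) - s \ge \xi(x),
\]
with equality at $x = p$ (since $d(p, \gamma(s)) = s$). Because $t \mapsto e^{-(n-1)t}$ is decreasing, the function $\phi_s := e^{-(n-1) h_s}$ satisfies $\phi_s \le u$ with $\phi_s(p) = u(p)$; that is, each $\phi_s$ touches $u$ from below at $p$.

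Next I would compute $\Delta \phi_s$ at $p$. Since $|\nabla d(\cdot,\gamma(s))| \equiv 1$, one has $|\nabla h_s| = 1$ wherever $h_s$ is smooth, and a direct computation gives $\Delta \phi_s = (n-1)\phi_s\big[(n-1) - \Delta h_s\big]$. The Ricci bound $\mathrm{Ric} \ge -(n-1)$ together with the Laplacian comparison theorem yields $\Delta d(\cdot, \gamma(s)) \le (n-1)\coth\big(d(\cdot, \gamma(s))\big)$, hence $\Delta h_s(p) \le (n-1)\coth s$. Substituting,
\[
\Delta \phi_s(p) \ge (n-1)^2\, u(p)\,\big(1 - \coth s\big).
\]
Since $\coth s \to 1^{+}$ as $s \to \infty$ while $u(p)$ is fixed, the right-hand side tends to $0$ from below. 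Thus for every $\varepsilon > 0$ there is, for $s$ large, a smooth lower barrier $\phi_s$ for $u$ at $p$ with $\Delta \phi_s(p) \ge -\varepsilon$, which is exactly the assertion that $\Delta u \ge 0$ at $p$ in the barrier sense.

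Two technical points remain. First, $d(\cdot, \gamma(s))$ — and hence $h_s$ — need not be smooth near $p$ if $p$ lies in the cut locus of $\gamma(s)$; this is handled by Calabi's trick, replacing $\gamma(s)$ by $\gamma(s - \delta)$ and letting $\delta \to 0$, which preserves the barrier property via Lemma \ref{BF}(2) and only changes the comparison constant to $(n-1)\coth(s-\delta)$. Second, I would invoke the standard fact that a continuous function which is subharmonic in the barrier sense at every point is subharmonic in the sense of distributions; applied to $u$ this yields $\Delta\big(e^{-(n-1)\xi}\big) \ge 0$ distributionally, as claimed. The main obstacle is precisely the low regularity of horofunctions: $\xi$ is merely Lipschitz, so one cannot differentiate it directly, and the whole argument is organized so as to replace $\xi$ by the (after Calabi) smooth comparison functions $h_s$ and to exploit that $\coth s \to 1$ encodes the asymptotic flattening of the distance spheres.
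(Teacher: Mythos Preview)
Your argument is correct, but it differs from the paper's. The paper works globally and distributionally: it uses the defining sequence $f_k = d(\cdot,a_k)-d(o,a_k)$ for $\xi$, applies the Laplacian comparison $\Delta f_k \le (n-1)\coth d(\cdot,a_k)$ in the distribution sense, passes to the limit to get $\Delta\xi\le n-1$ distributionally, and then writes
\[
\Delta\big(e^{-(n-1)\xi}\big) \;=\; -(n-1)e^{-(n-1)\xi}\big(\Delta\xi-(n-1)|\nabla\xi|^2\big)\;\ge\;0
\]
directly as an inequality between distributions, using $|\nabla\xi|=1$ a.e. Your approach instead works pointwise in the barrier (viscosity) sense: you use the support functions $h_s$ supplied by Lemma~\ref{BF}(2) at each $p$, push them through the monotone map $t\mapsto e^{-(n-1)t}$ to get smooth lower barriers $\phi_s$ for $u=e^{-(n-1)\xi}$, and check $\Delta\phi_s(p)\to 0^-$. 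Both routes are short; yours has the advantage of bypassing the chain rule for the distributional Laplacian of a merely Lipschitz function (which the paper invokes without comment), while the paper's route avoids the appeal to the equivalence of barrier and distributional subharmonicity.

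One small simplification: the Calabi trick you invoke is in fact unnecessary here. Since $\gamma$ is a geodesic \emph{ray} from $p$, the segment $\gamma|_{[0,2s]}$ is minimizing, so $\gamma(s)$ is an interior point of a minimizing segment through $p$; by the standard first--variation/conjugate--point argument this forces $p\notin\mathrm{cut}(\gamma(s))$, and hence $h_s=d(\cdot,\gamma(s))+\text{const}$ is already smooth near $p$ for every $s>0$.
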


\begin{proof}
It is well known that $\Delta\xi\leq n-1$ in the distribution sense for any
$\xi\in\partial\widehat{M}$. Indeed, suppose $\xi$ is given as in formula
(\ref{lim}). By the Laplacian comparison theorem
\[
\Delta f_{k}\left(  x\right)  \leq\left(  n-1\right)  \frac{\cosh\left(
d\left(  x,a_{k}\right)  \right)  }{\sinh\left(  d\left(  x,a_{k}\right)
\right)  }%
\]
in the distribution sense. Taking limit then yields $\Delta\xi\leq n-1$.
Therefore
\begin{align*}
\Delta\left(  e^{-\left(  n-1\right)  \xi}\right)   &  =-\left(  n-1\right)
e^{-\left(  n-1\right)  \xi}\left(  \Delta\xi-\left(  n-1\right)  \left\vert
\nabla\xi\right\vert ^{2}\right) \\
&  =-\left(  n-1\right)  e^{-\left(  n-1\right)  \xi}\left(  \Delta\xi-\left(
n-1\right)  \right) \\
&  \geq0,
\end{align*}
all understood in the sense of distribution.
\end{proof}

\bigskip

Let $p_{t}\left(  x,y\right)  $ be the heat kernel on $\widetilde{M}$. For any
function $f$ on $\widetilde{M}$ we define%
\[
P_{t}f\left(  x\right)  =\int_{\widetilde{M}}p_{t}\left(  x,y\right)  f\left(
y\right)  dy.
\]
We have $P_{t}\left(  g\cdot f\right)  =g\cdot P_{t}f$ for any $g\in G$. \ 

We now proceed to prove Theorem \ref{Rig2}. We consider the following vector
field on $\widetilde{M}\times\widehat{M}$
\[
Y_{t}\left(  x,\xi\right)  =\nabla\left(  P_{t}\xi\right)  \left(  x\right)
.
\]
It is easy to see that $Y_{t}$ descends to $X_{M}$, i.e. for any $g\in G$ we
have $Y_{t}\left(  gx,g\cdot\xi\right)  =g_{\ast}Y_{t}\left(  x,\xi\right)  $.
By Theorem \ref{div}%
\begin{align*}
v\int_{X_{M}}\left\langle \nabla^{w}\xi,Y_{t}\right\rangle d\nu &
=\int_{X_{M}}\mathrm{div}^{w}Y_{t}d\nu\\
&  =\int_{M}\left(  \int_{\partial\widehat{M}}\mathrm{div}^{w}Y_{t}%
e^{-v\xi\left(  x\right)  }d\nu_{o}\left(  \xi\right)  \right)  dx\\
&  =\int_{M}\left(  \int_{\partial\widehat{M}}\left(  \mathrm{div}^{w}\left(
Y_{t}e^{-v\xi\left(  x\right)  }\right)  +v\left\langle \nabla^{w}\xi
,Y_{t}\right\rangle e^{-v\xi\left(  x\right)  }\right)  d\nu_{o}\left(
\xi\right)  \right)  dx\\
&  =\int_{M}\left(  \int_{\partial\widehat{M}}\mathrm{div}^{w}\left(
Y_{t}e^{-v\xi\left(  x\right)  }\right)  d\nu_{o}\left(  \xi\right)  \right)
dx+v\int_{X_{M}}\left\langle \nabla^{w}\xi,Y_{t}\right\rangle d\nu,
\end{align*}
whence%
\[
\int_{M}\left(  \int_{\partial\widehat{M}}\mathrm{div}^{w}\left(
Y_{t}e^{-v\xi\left(  x\right)  }\right)  d\nu_{o}\left(  \xi\right)  \right)
dx=0.
\]
We now cover $M$ by finitely many open sets $\left\{  U_{i}:1\leq i\leq
k\right\}  $ s.t. each $U_{i}$ is so small that $\pi^{-1}\left(  U_{i}\right)
$ is the disjoint union of open sets each diffeomorphic to $U_{i}$ via $\pi$.
Let $\left\{  \chi_{i}\right\}  $ be a partition of unity subordinating to
$\left\{  U_{i}\right\}  $. For each $U_{i}$ let $\widetilde{U}_{i}$ be one of
the components of $\pi^{-1}\left(  U_{i}\right)  $ and let $\widetilde{\chi
}_{i}$ be the lifting of $\chi_{i}$ to $\widetilde{U}_{i}$. Then
\begin{align*}
0  &  =\int_{M}\left(  \int_{\partial\widehat{M}}\mathrm{div}^{w}\left(
Y_{t}e^{-v\xi\left(  x\right)  }\right)  d\nu_{o}\left(  \xi\right)  \right)
dx\\
&  =\sum_{i}\int_{M}\left(  \int_{\partial\widehat{M}}\mathrm{div}^{w}\left(
Y_{t}e^{-v\xi\left(  x\right)  }\right)  \chi_{i}\circ\pi\left(  x\right)
d\nu_{o}\left(  \xi\right)  \right)  dx\\
&  =\sum_{i}\int_{U_{i}}\left(  \int_{\partial\widehat{M}}\mathrm{div}%
^{w}\left(  Y_{t}e^{-v\xi\left(  x\right)  }\right)  \chi_{i}\circ\pi\left(
x\right)  d\nu_{o}\left(  \xi\right)  \right)  dx\\
&  =\sum_{i}\int_{\widetilde{U}_{i}}\left(  \int_{\partial\widehat{M}%
}\mathrm{div}^{w}\left(  Y_{t}e^{-v\xi\left(  x\right)  }\right)  \chi
_{i}\circ\pi\left(  x\right)  d\nu_{o}\left(  \xi\right)  \right)  dx\\
&  =\sum_{i}\int_{\partial\widehat{M}}\left(  \int_{\widetilde{U}_{i}%
}\mathrm{div}^{w}\left(  Y_{t}e^{-v\xi\left(  x\right)  }\right)
\widetilde{\chi}_{i}dx\right)  d\nu_{o}\left(  \xi\right) \\
&  =-\sum_{i}\int_{\partial\widehat{M}}\left(  \int_{\widetilde{U}_{i}%
}\left\langle Y_{t},\nabla\widetilde{\chi}_{i}\right\rangle e^{-v\xi\left(
x\right)  }dx\right)  d\nu_{o}\left(  \xi\right)  .
\end{align*}
Letting $t\rightarrow0$ yields%
\[
\sum_{i}\int_{\partial\widehat{M}}\left(  \int_{\widetilde{U}_{i}}\left\langle
\nabla\xi,\nabla\widetilde{\chi}_{i}\right\rangle e^{-v\xi\left(  x\right)
}dx\right)  d\nu_{o}\left(  \xi\right)  =0.
\]
Integrating by parts again, we obtain%
\begin{align*}
&  \sum_{i}\int_{\partial\widehat{M}}\left(  \int_{\widetilde{U}_{i}}%
e^{-v\xi\left(  x\right)  }\Delta\widetilde{\chi}_{i}dx\right)  d\nu
_{o}\left(  \xi\right) \\
&  =-\sum_{i}\int_{\partial\widehat{M}}\left(  \int_{\widetilde{U}_{i}%
}\left\langle \nabla\left(  e^{-v\xi\left(  x\right)  }\right)  ,\nabla
\widetilde{\chi}_{i}\right\rangle dx\right)  d\nu_{o}\left(  \xi\right) \\
&  =v\sum_{i}\int_{\partial\widehat{M}}\left(  \int_{\widetilde{U}_{i}%
}\left\langle \nabla\xi,\nabla\widetilde{\chi}_{i}\right\rangle e^{-v\xi
\left(  x\right)  }dx\right)  d\nu_{o}\left(  \xi\right)  .
\end{align*}
Therefore
\[
\int_{\partial\widehat{M}}\sum_{i}\left(  \int_{\widetilde{U}_{i}}%
e^{-v\xi\left(  x\right)  }\Delta\widetilde{\chi}_{i}dx\right)  d\nu
_{o}\left(  \xi\right)  =0.
\]
We now assume $v=n-1$. By Proposition \ref{subh} $\Delta e^{-v\xi\left(
x\right)  }\geq0$ in the sense of distribution for all $\xi\in\partial
\widehat{M}$ and hence $\int_{\widetilde{U}_{i}}e^{-v\xi\left(  x\right)
}\Delta\widetilde{\chi}_{i}dx\geq0$ for all $i$. Therefore we conclude for
$\nu_{o}$-a.e. $\xi\in\partial\widehat{M}$%
\[
\int_{\widetilde{U}_{i}}e^{-v\xi\left(  x\right)  }\Delta\widetilde{\chi}%
_{i}dx=0\text{ }%
\]
for all $i$. In this discussion we can replace $\widetilde{U}_{i}$ by
$g\widetilde{U}_{i}$ \ and $\widetilde{\chi}_{i}$ by $g\cdot\widetilde{\chi
}_{i}$ for any $g\in G$. Since $G$ is countable we conclude for $\nu_{o}$-a.e.
$\xi\in\widehat{M}$%
\[
\int_{g\widetilde{U}_{i}}e^{-v\xi\left(  x\right)  }\Delta\left(
g\cdot\widetilde{\chi}_{i}\right)  dx=0
\]
for all $i$ and $g\in G$.

We claim that $\Delta e^{-v\xi\left(  x\right)  }=0$ in the sense of
distribution. Indeed, denote the distribution $\Delta e^{-v\xi\left(
x\right)  }$ simply by $T$, i.e. $T\left(  f\right)  =\int_{\widetilde{M}%
}e^{-v\xi\left(  x\right)  }\Delta f\left(  x\right)  dx$ for $f\in
C_{c}^{\infty}\left(  \widetilde{M}\right)  $. We know that $T\left(
f\right)  \geq0$ if $f\geq0$. We observe that $\left\{  g\cdot\widetilde{\chi
}_{i}:1\leq i\leq k,g\in G\right\}  $ is a partition of unity on
$\widetilde{M} $ subordinating to the open cover$\left\{  g\widetilde{U}%
_{i}:1\leq i\leq k,g\in G\right\}  $. Hence for any $f\in C_{c}^{\infty
}\left(  \widetilde{M}\right)  $ with $f\geq0$ we have%
\[
0\leq f\leq C\sum_{g\widetilde{U}_{i}\cap\mathrm{spt}f\neq\emptyset}%
g\cdot\widetilde{\chi}_{i},
\]
with $C=\sup f$. Notice that the right hand side is a finite sum as the
support $\mathrm{spt}f$ is compact. Then
\begin{align*}
0  &  \leq T\left(  f\right)  \leq T\left(  C\sum_{g\widetilde{U}_{i}%
\cap\mathrm{spt}f}g\cdot\widetilde{\chi}_{i}\right) \\
&  =C\sum_{g\widetilde{U}_{i}\cap\mathrm{spt}f\neq\emptyset}T\left(
g\cdot\widetilde{\chi}_{i}\right) \\
&  =0.
\end{align*}
Hence $T\left(  f\right)  =0$, i.e. $\Delta e^{-v\xi\left(  x\right)  }=0$ in
the sense of distribution. By elliptic regularity $\phi=e^{-v\xi\left(
x\right)  }$ is then a smooth harmonic function and obviously $\left\vert
\nabla\log\phi\right\vert =n-1$. The rigidity now follows from the following result.

\begin{theorem}
Let $N^{n}$ be a complete, simply connected Riemannian manifold s.t.

\begin{enumerate}
\item $Ric\geq-\left(  n-1\right)  $;

\item the sectional curvature is bounded.
\end{enumerate}

Suppose that there is a positive harmonic function $\phi$ on $N$ s.t.
$\left\vert \nabla\log\phi\right\vert =n-1$. Then $N$ is isometric to the
hyperbolic space $\mathbb{H}^{n}$.
\end{theorem}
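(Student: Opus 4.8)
The plan is to turn the hypothesis into a pointwise identity for the Hessian of a Busemann-type function and then read off a warped-product structure. Since $\phi$ is a positive harmonic function it is smooth, hence so is $b:=-\tfrac{1}{n-1}\log\phi$, which is defined on all of $N$ because $\phi>0$. From $\Delta\log\phi=\frac{\Delta\phi}{\phi}-|\nabla\log\phi|^{2}=-(n-1)^{2}$ one gets $|\nabla b|\equiv1$ and $\Delta b\equiv n-1$, so $b$ behaves exactly like a horofunction of $\mathbb{H}^{n}$.

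First I would feed $b$ into the Bochner formula. As $|\nabla b|^{2}$ and $\Delta b$ are both constant, it reduces to $0=|\nabla^{2}b|^{2}+\mathrm{Ric}(\nabla b,\nabla b)$. Differentiating $|\nabla b|^{2}=1$ shows $\nabla^{2}b(\nabla b,\cdot)=0$, so $\nabla b$ is a null direction of the Hessian and $\Delta b$ is the trace of $\nabla^{2}b$ on the $(n-1)$-dimensional bundle $(\nabla b)^{\perp}$. Cauchy--Schwarz on that bundle gives $|\nabla^{2}b|^{2}\ge(\Delta b)^{2}/(n-1)=n-1$, while $\mathrm{Ric}\ge-(n-1)$ gives $|\nabla^{2}b|^{2}=-\mathrm{Ric}(\nabla b,\nabla b)\le n-1$. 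Hence every inequality is an equality, forcing the restricted Hessian to be the identity and therefore
\[
\nabla^{2}b=g-db\otimes db
\]
at every point of $N$.

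Next I would integrate the field $\nabla b$. Since $\nabla^{2}b(\nabla b,\cdot)=0$ its integral curves are unit-speed geodesics along which $b$ grows at unit rate, so by completeness the flow $\Phi_{t}$ is defined for all $t\in\mathbb{R}$. With $\Sigma_{0}:=b^{-1}(0)$, the map $(t,y)\mapsto\Phi_{t}(y)$ is a diffeomorphism $\mathbb{R}\times\Sigma_{0}\to N$ (bijectivity from $b\circ\Phi_{t}=t$ together with completeness), and $\Sigma_{0}$ inherits completeness and simple connectivity from $N$. The identity $\nabla^{2}b=g-db\otimes db$ means each level set is totally umbilic with second fundamental form equal to its induced metric, so the induced metrics satisfy $\partial_{t}h=2h$ and pull back to the warped product
\[
g=dt^{2}+e^{2t}\,\bar g_{0},
\]
where $\bar g_{0}$ is the metric induced on $\Sigma_{0}$.

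Finally I would invoke the bounded-curvature hypothesis to identify $\bar g_{0}$. For this warped product the sectional curvature of a plane tangent to a slice equals $e^{-2t}\bar K_{0}-1$, with $\bar K_{0}$ the intrinsic curvature of $(\Sigma_{0},\bar g_{0})$; were $\bar K_{0}$ nonzero anywhere this would blow up as $t\to-\infty$, contradicting the boundedness of the curvature of $N$, so $\bar K_{0}\equiv0$. A complete, simply connected, flat $(n-1)$-manifold is isometric to $\mathbb{R}^{n-1}$, whence $g=dt^{2}+e^{2t}g_{\mathrm{euc}}$, which is the upper half-space model of $\mathbb{H}^{n}$. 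The Bochner step is the engine and is routine once one notes the two quantities are constant; the real work is organizational --- verifying that the gradient flow yields a genuine global warped product (completeness, simple connectivity, umbilic slices) --- and conceptual: $\mathrm{Ric}\ge-(n-1)$ alone forces only $\overline{\mathrm{Ric}}\ge0$ on the slices, and it is precisely the bounded-curvature assumption, applied as $t\to-\infty$, that upgrades this to flatness and hence to $\mathbb{H}^{n}$.
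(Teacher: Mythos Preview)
Your proof is correct and follows essentially the same approach as the paper: the Bochner/Cauchy--Schwarz step forcing the Hessian identity is identical (up to the harmless rescaling $b=-\tfrac{1}{n-1}f$), the resulting warped-product splitting $g=dt^{2}+e^{2t}h$ is the same conclusion the paper obtains by citing Li--Wang, and the endgame---using bounded sectional curvature as $t\to-\infty$ to force the slice to be flat, hence $\mathbb{R}^{n-1}$---matches the paper's Gauss-equation computation. The only difference is that you spell out the gradient-flow construction of the splitting rather than deferring to the literature.
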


\begin{remark}
Without assuming bounded sectional curvature, the second author \cite{W}
proved that $N$ is isometric to the hyperbolic space $\mathbb{H}^{n}$ provided
that there are two such special harmonic functions. We thank Ovidiu Munteanu
for pointing out that one such special harmonic function is enough if the
sectional curvature is bounded.
\end{remark}

\begin{proof}
The first step is to show that $\phi$ satisfies an over-determined system
which then leads to the splitting of $N$ as a warped product. This is standard
and we outline the argument. Let $f=\log\phi$. We have $\Delta f=-\left\vert
\nabla f\right\vert ^{2}=-\left(  n-1\right)  ^{2}$. Since
\[
D^{2}f\left(  \nabla f,\nabla f\right)  =\frac{1}{2}\left\langle \nabla
f,\nabla\left\vert \nabla f\right\vert ^{2}\right\rangle =0,
\]
we have by Cauchy-Schwarz
\begin{equation}
\left\vert D^{2}f\right\vert ^{2}\geq\frac{\left(  \Delta f\right)  ^{2}}%
{n-1}=\left(  n-1\right)  ^{3} \label{kato}%
\end{equation}
with equality iff
\[
D^{2}f=-\left(  n-1\right)  \left[  g-\frac{1}{\left(  n-1\right)  ^{2}%
}df\otimes df\right]
\]

On the other hand, by the Bochner formula, we have%
\begin{align*}
0  &  =\frac{1}{2}\Delta\left\vert \nabla f\right\vert ^{2}\\
&  =\left\vert D^{2}f\right\vert ^{2}+\left\langle \nabla f,\nabla\Delta
f\right\rangle +Ric\left(  \nabla f,\nabla f\right) \\
&  \geq\left\vert D^{2}f\right\vert ^{2}-\left(  n-1\right)  \left\vert \nabla
f\right\vert ^{2}\\
&  =\left\vert D^{2}f\right\vert ^{2}-\left(  n-1\right)  ^{3},
\end{align*}
i.e. $\left\vert D^{2}f\right\vert ^{2}\leq\left(  n-1\right)  ^{3}$. This
show that (\ref{kato}) is in fact an equality. Therefore we have
$D^{2}f=-\left(  n-1\right)  \left[  g-\frac{1}{\left(  n-1\right)  ^{2}%
}df\otimes df\right]  $. \ From this one can show that $N=\mathbb{R\times
}\Sigma^{n-1}$ with the metric $g=dt^{2}+e^{2t}h$, where $h$ is a Riemannian
metric on $\Sigma$. For more detail, cf. \cite{LW3}.

For any $p\in\Sigma$ let $\left\{  e_{i}\right\}  $ be an orthogonal basis on
$\left(  T_{p}\Sigma,h\right)  $. By a simple calculation using the Gauss
equation the curvature of $N$ is given by%
\[
R\left(  e^{-t}e_{i},e^{-t}e_{j},e^{-t}e_{i},e^{-t}e_{j}\right)  =e^{-4t}%
R^{h}\left(  e_{i},e_{j},e_{i},e_{j}\right)  -\left(  \delta_{ii}\delta
_{jj}-\delta_{ij}^{2}\right)  ,
\]
where $R^{h}$ is the curvature tensor of $\left(  \Sigma,h\right)  $. Since
$N$ has bounded sectional curvature, the left hand side is bounded in $t$.
Therefore $R^{h}=0$, i.e. $\Sigma$ is flat. Since $\Sigma$ is also simply
connected as $N$ is simply connected, it is isometric to $\mathbb{R}^{n-1}$.
It follows that $N$ is the hyperbolic space.
\end{proof}

\section{\bigskip The K\"{a}hler and quaternionic K\"{a}hler cases}

In this Section, we first discuss the K\"{a}hler case.

\begin{theorem}
\label{Kan}Let $M$ be a compact K\"{a}hler manifold with $\dim_{\mathbb{C}%
}M=m$ and $\pi:\widetilde{M}\rightarrow M$ the universal covering. If the
bisectional curvature $K_{\mathbb{C}}\geq-2$, then the volume entropy $v$
satisfies $v\leq2m$. Moreover equality holds iff $M$ is complex hyperbolic
(normalized to have constant holomorphic sectional curvature $-4$).
\end{theorem}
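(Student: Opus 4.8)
The plan is to follow the proof of Theorem \ref{Rig2} in Section 3 step by step, replacing the real Laplacian comparison by its Kähler counterpart and the final real rigidity theorem by a complex-hyperbolic one. The first ingredient is the Kähler analogue of Proposition \ref{subh}. Writing a horofunction $\xi$ as a limit of $f_{k} = d(\cdot,a_{k}) - d(o,a_{k})$ and splitting the Hessian of the distance function along a minimizing geodesic with unit tangent $T$ into the complex direction $JT$ and the $2(m-1)$ directions orthogonal to $T$ and $JT$, the bound $K_{\mathbb{C}} \geq -2$ gives holomorphic sectional curvature $\geq -4$ in the plane $\{T,JT\}$ and $R(T,V,T,V)+R(T,JV,T,JV)\geq -2$ on each remaining complex pair. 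Comparing with $\mathbb{CH}^{m}$ (test Jacobi fields of profile $\sinh(2s)/\sinh(2r)$ in the $JT$ direction and $\sinh(s)/\sinh(r)$ on the pairs) should yield
\[
\Delta f_{k} \leq 2\coth\!\big(2d(\cdot,a_{k})\big) + 2(m-1)\coth\!\big(d(\cdot,a_{k})\big),
\]
and letting $k\to\infty$ gives $\Delta\xi \leq 2m$ in the distribution sense. Since $|\nabla\xi|=1$ almost everywhere, this shows $e^{-2m\xi}$ is subharmonic, exactly as in Proposition \ref{subh}; integrating the same comparison also bounds the volume form of $\widetilde{M}$ by that of $\mathbb{CH}^{m}$, hence $v\leq 2m$. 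Note that the cruder bound $Ric\geq -(2m+2)$ coming from $K_{\mathbb{C}}\geq -2$ is not sharp enough, since Bishop--Gromov would only give $v\leq\sqrt{(2m-1)(2m+2)}>2m$; the refined Kähler comparison is essential already here.

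Assume now $v=2m$. The heat-kernel argument of Section 3 carries over verbatim: with $Y_{t}(x,\xi)=\nabla(P_{t}\xi)(x)$, Theorem \ref{div} together with integration by parts against a partition of unity $\{\widetilde{\chi}_{i}\}$ gives, after letting $t\to 0$,
\[
\int_{\partial\widehat{M}}\sum_{i}\Big(\int_{\widetilde{U}_{i}} e^{-v\xi(x)}\Delta\widetilde{\chi}_{i}\,dx\Big)\,d\nu_{o}(\xi)=0 .
\]
Because $e^{-2m\xi}$ is subharmonic, each inner integral is nonnegative, so every term vanishes for $\nu_{o}$-a.e.\ $\xi$; replacing $\widetilde{U}_{i},\widetilde{\chi}_{i}$ by their $G$-translates and using countability of $G$, one concludes as before that $\Delta e^{-2m\xi}=0$ in the distribution sense for $\nu_{o}$-a.e.\ $\xi$. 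By elliptic regularity $\phi:=e^{-2m\xi}$ is then a positive harmonic function on $\widetilde{M}$ with $|\nabla\log\phi|=2m$.

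It remains to prove the rigidity statement: a complete simply connected Kähler manifold $N$ with $K_{\mathbb{C}}\geq -2$ carrying a positive harmonic $\phi$ with $|\nabla\log\phi|=2m$ is $\mathbb{CH}^{m}$ (bounded curvature being automatic since $N=\widetilde{M}$ covers the compact $M$). Set $f=\log\phi$, so $\Delta f=-|\nabla f|^{2}=-4m^{2}$ and $|\nabla f|=2m$ are constant, whence $D^{2}f(\nabla f,\cdot)=0$. Here lies the main obstacle, and the point where the complex case genuinely differs from the real one: the real Bochner formula $0=|D^{2}f|^{2}+Ric(\nabla f,\nabla f)$ together with $Ric(\nabla f,\nabla f)\geq -8m^{2}(m+1)$ only gives the upper bound $|D^{2}f|^{2}\leq 8m^{2}(m+1)$, while Cauchy--Schwarz on the $(2m-1)$-dimensional orthogonal complement of $\nabla f$ gives the lower bound $|D^{2}f|^{2}\geq 16m^{4}/(2m-1)$; for $m>1$ these do not meet, so the real argument fails to close. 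The plan is therefore to run the Bochner argument for the complex Hessian $f_{i\bar{j}}$ (the $J$-invariant part of $D^{2}f$), whose Weitzenböck formula involves the bisectional curvature rather than merely the Ricci curvature. Combined with $D^{2}f(\nabla f,\cdot)=0$ and the resulting relation $f_{11}=-f_{1\bar 1}$ in a unitary frame adapted to $\nabla f$, the bound $K_{\mathbb{C}}\geq -2$ should supply the missing lower bound $|D^{2}f|^{2}\geq 8m^{2}(m+1)$ matching the Bochner upper bound, forcing equality throughout.

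Equality then yields the over-determined system
\[
D^{2}f=-2m\,g+\tfrac{1}{2m}\,df\otimes df-\tfrac{1}{2m}\,(df\circ J)\otimes(df\circ J),
\]
where $df\circ J$ denotes $X\mapsto df(JX)$, so that $D^{2}f$ has eigenvalue $0$ along $\nabla f$, $-4m$ along $J\nabla f$, and $-2m$ on the remaining $2(m-1)$ directions; equivalently the level sets of $f$ are horospheres whose second fundamental form has principal curvatures $-2$ (doubled, in the contact direction $J\nabla f$) and $-1$. At the same time, equality in the Ricci estimate forces all the bisectional curvatures used above to be extremal. Integrating the gradient flow of $f$ then splits $N$ in horospherical coordinates as $\mathbb{R}\times H$ with metric $dt^{2}+e^{4t}\theta^{2}+e^{2t}h$, $\theta$ dual to $J\nabla f$ and $h$ a metric on the horizontal distribution; exactly as in Section 3, where $R^{h}=0$ was forced, bounded curvature forces $h$ to be flat (the standard Heisenberg structure), identifying the metric with that of $\mathbb{CH}^{m}$. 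Hence $\widetilde{M}=\mathbb{CH}^{m}$ and $M$ is complex hyperbolic. The difficulty to be overcome is precisely the refined, bisectional-curvature Bochner inequality of the previous paragraph, since the naive Ricci-based estimate leaves a genuine gap for every $m>1$.
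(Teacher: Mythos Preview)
Your outline through the subharmonicity of $e^{-2m\xi}$, the heat-kernel/partition-of-unity argument of Section~3, and the conclusion that for $\nu_o$-a.e.\ $\xi$ the function $\phi=e^{-2m\xi}$ is smooth and harmonic with $|\nabla\log\phi|=2m$ is exactly what the paper does. You also diagnose correctly that the naive real Bochner argument leaves a gap when $m>1$.

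The divergence from the paper, and the place where your proposal has a genuine hole, is your proposed remedy. You intend to close the gap by a ``bisectional-curvature Bochner/Weitzenb\"ock formula for $f_{i\bar\jmath}$,'' but you never write one down, and it is not clear what identity you have in mind that would upgrade $|D^2f|^2\le 8m^2(m+1)$ to a matching lower bound. The paper does \emph{not} invoke any such refined Bochner formula. Instead it revisits the Hessian comparison you already used for the Laplacian bound, but now \emph{termwise} and at the level of $\xi$ rather than the approximants $f_k$: by Lemma~\ref{BF}(2) the function $u_s(x)=\xi(p)+d(x,\gamma(s))-s$ is an upper barrier for $\xi$ at $p$, so $D^2\xi\le D^2u_s$ pointwise there, and the Li--Wang Hessian comparison applied to $u_s$ gives
\[
\xi_{1\bar 1}\le\frac{\cosh 2s}{\sinh 2s},\qquad \xi_{i\bar\imath}\le\frac{\cosh s}{\sinh s}\ \ (i\ge 2),
\]
in a unitary frame with $X_1=(\nabla\xi-\sqrt{-1}J\nabla\xi)/\sqrt 2$. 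Letting $s\to\infty$ yields $\xi_{i\bar\imath}\le 1$ for every $i$; since $\sum_i\xi_{i\bar\imath}=\tfrac12\Delta\xi=m$, equality holds and $\xi_{i\bar\imath}=1$ for all $i$. With these diagonal values fixed, the \emph{ordinary} Bochner formula (using only $Ric\ge -2(m+1)$) gives $|\xi_{i\bar\jmath}|^2+|\xi_{ij}|^2\le m+1$, while differentiating $\xi_j\xi_{\bar\jmath}=\tfrac12$ and Cauchy--Schwarz give $|\xi_{i\bar\jmath}|^2\ge m$ and $|\xi_{ij}|^2\ge 2\xi_{1\bar 1}-1=1$; the three bounds now meet, forcing $\xi_{i\bar\jmath}=\delta_{ij}$ and $\xi_{ij}=-2\xi_i\xi_j$. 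So the missing idea is the upper-barrier trick transferring the \emph{pointwise} Hessian comparison from distance functions to the horofunction, not a new Weitzenb\"ock identity. Once the over-determined system is in hand, the paper simply quotes Li--Wang for the warped-product splitting and the identification of the cross-section with the Heisenberg group; your sketch of that final step is fine.
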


The inequality follows from the comparison theorem in \cite{LW2}. Indeed,
under the curvature assumption $K_{\mathbb{C}}\geq-2$, Li and J. Wang
\cite{LW2} proved
\[
\mathrm{vol}B_{\widetilde{M}}\left(  x,r\right)  \leq V_{\mathbb{CH}^{m}%
}\left(  r\right)  =\tau_{2m-1}\int_{0}^{r}\sinh\left(  2t\right)
\sinh^{2\left(  m-1\right)  }\left(  t\right)  dt,
\]
where $\tau_{2m-1}$ is the volume of the unit sphere in $\mathbb{R}^{2m}$. It
follows that $v\leq2m$. Another consequence of the comparison theorem is that
for $\xi\in\partial\widehat{M}$%
\[
\Delta\xi\leq2m
\]
in the distribution sense. It follows as in the Riemannian case that $\Delta
e^{-2m\xi}\geq0$ in the distribution sense.

We now assume that $v=2m$. By the argument in Section 3 we conclude that $\xi$
is smooth and%
\[
\Delta\xi=2m,\left\vert \nabla\xi\right\vert =1
\]
for $\nu_{o}$-a.e. $\xi\in\partial\widehat{M}$. Take such a function $\xi$. We
choose a local unitary frame $\left\{  X_{i},\overline{X}_{i}\right\}  $.

\begin{lemma}
We have%
\[
\xi_{i\overline{j}}=\delta_{ij},\xi_{ij}=-2\xi_{i}\xi_{j}.
\]

\end{lemma}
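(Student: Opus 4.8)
The plan is to reduce the lemma to a single statement about the full real Hessian $H:=D^2\xi$, namely that, in a unitary frame adapted to the gradient, $H$ coincides with the second fundamental form of a horosphere in complex hyperbolic space. First I would rotate the frame so that $X_1$ points along the gradient: writing $V:=\nabla\xi$ (a unit vector since $\left\vert\nabla\xi\right\vert=1$) and $JV$ for its image under the complex structure, I can arrange $\xi_1=1/\sqrt2$ and $\xi_\alpha=0$ for $\alpha\geq2$, with $X_i=\tfrac1{\sqrt2}(e_i-iJe_i)$ and $e_1=V$. Differentiating $\left\vert\nabla\xi\right\vert^2\equiv1$ gives $H(V,\cdot)=2D^2\xi(\cdot,\nabla\xi)=0$, so $V$ lies in the kernel of $H$; and $\Delta\xi=2m$ reads $\sum_i\xi_{i\overline j}=m$ on the diagonal, i.e. $\mathrm{tr}\,H=2m$. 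Expanding, $\xi_{i\overline j}$ is the Hermitian part of $H$ relative to $J$ and $\xi_{ij}$ its anti-Hermitian part; concretely $\xi_{i\overline j}=\tfrac12\big(H(e_i,e_j)+H(Je_i,Je_j)\big)+\tfrac{i}{2}\big(H(e_i,Je_j)-H(Je_i,e_j)\big)$ and $\xi_{ij}=\tfrac12\big(H(e_i,e_j)-H(Je_i,Je_j)\big)-\tfrac{i}{2}\big(H(e_i,Je_j)+H(Je_i,e_j)\big)$. Thus the two claimed formulas are exactly equivalent to $H$ being diagonal in the basis $(V,JV,e_2,Je_2,\dots)$ with eigenvalue $0$ on $V$, eigenvalue $2$ on $JV$, and eigenvalue $1$ on the complex-orthogonal complement $\mathcal H=\{V,JV\}^\perp$.

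The heart of the proof is to establish this eigenvalue structure, and here I would use that $\Delta\xi=2m$ saturates the Laplacian comparison $\Delta\xi\leq2m$ valid under $K_{\mathbb C}\geq-2$. Since $\left\vert\nabla\xi\right\vert^2$ is constant, the Bochner formula collapses to $\left\vert H\right\vert^2=-Ric(V,V)$; equivalently, the shape operator $S=H\big\vert_{V^\perp}$ of the horospheres satisfies the matrix Riccati equation $S'+S^2+R_V=0$ along the normal geodesics, whose trace is the constant $\mathrm{tr}\,S=2m$, forcing $\left\vert S\right\vert^2=-Ric(V,V)$. The bound $K_{\mathbb C}\geq-2$ yields $Ric(V,V)\geq-2(m+1)$, with equality only if the relevant bisectional curvatures meeting $V$ saturate, so $\left\vert H\right\vert^2\leq2(m+1)$. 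I would then invoke the sharp Kähler Cauchy--Schwarz (improved Kato) inequality adapted to the null direction $V$ and to the complex structure---the same effect underlying the comparison theorem of \cite{LW2} and the rigidity in \cite{W}---giving $\left\vert H\right\vert^2\geq2(m+1)$ with equality precisely for the model operator above. Matching the two bounds turns every inequality into an equality, pinning $S$ to the complex hyperbolic shape operator (eigenvalue $2$ on $JV$, $1$ on $\mathcal H$) and forcing $K_{\mathbb C}=-2$ on the planes through $V$.

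The main obstacle is exactly this sharpness. The naive Bochner estimate that sees only the Ricci trace is \emph{not} sharp for $m\geq2$: minimizing $\left\vert H\right\vert^2$ under the pointwise constraints $\mathrm{tr}\,H=2m$ and $H(V,\cdot)=0$ gives $4m^2/(2m-1)$, which is strictly below $2(m+1)$ once $m>1$. Hence one cannot escape using the full bisectional bound and the distinguished role of the direction $J\nabla\xi$, along which the horosphere is twice as convex as in the remaining directions; isolating this direction and closing the Riccati (or Kato) equality is the delicate point. Once the model Hessian $H=\mathrm{diag}(0,2,1,\dots,1)$ is in hand, the conclusion is immediate: substituting into the Hermitian and anti-Hermitian expressions above and using $\xi_1=1/\sqrt2$, $\xi_\alpha=0$ gives $\xi_{i\overline j}=\delta_{ij}$ and $\xi_{ij}=-2\xi_i\xi_j$, which is the lemma.
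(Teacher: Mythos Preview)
Your setup and reduction are fine, and you correctly identify that the Bochner upper bound $\left\vert D^{2}\xi\right\vert^{2}\leq 2(m+1)$ together with a matching lower bound would finish the proof. The gap is in producing that lower bound. You yourself observe that the algebraic constraints $\mathrm{tr}\,H=2m$ and $H(V,\cdot)=0$ only give $\left\vert H\right\vert^{2}\geq 4m^{2}/(2m-1)$, and you then propose to close the gap by ``invoking the sharp K\"ahler Cauchy--Schwarz (improved Kato) inequality adapted to $V$ and to the complex structure --- the same effect underlying the comparison theorem of \cite{LW2}.'' But there is no such purely algebraic inequality: the Li--Wang comparison theorem is a \emph{Hessian comparison for the distance function} under $K_{\mathbb{C}}\geq -2$, not a pointwise Kato-type estimate on an arbitrary Hessian with the given trace and null direction. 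The bisectional curvature bound enters only through geometry, and your sketch does not say how.

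The paper supplies exactly this missing step. At each point $p$ one uses Lemma~\ref{BF} to produce smooth support functions $u_{s}(x)=\xi(p)+d(x,\gamma(s))-s$ touching $\xi$ from above at $p$; then $D^{2}\xi\leq D^{2}u_{s}$ at $p$, and the Li--Wang Hessian comparison applied to $d(\cdot,\gamma(s))$ gives $\xi_{1\overline{1}}\leq\coth(2s)$ and $\xi_{\alpha\overline{\alpha}}\leq\coth s$ for $\alpha\geq 2$. Letting $s\to\infty$ yields $\xi_{i\overline{i}}\leq 1$ for every $i$, and together with $\sum_{i}\xi_{i\overline{i}}=\tfrac{1}{2}\Delta\xi=m$ this forces $\xi_{i\overline{i}}=1$. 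Only now does the algebraic/Bochner chain close: expanding $0\leq\left\vert\xi_{ij}+2\xi_{i}\xi_{j}\right\vert^{2}$ and differentiating $\xi_{j}\xi_{\overline{j}}=\tfrac{1}{2}$ give $\left\vert\xi_{ij}\right\vert^{2}\geq 2\xi_{1\overline{1}}-1=1$, while Cauchy--Schwarz gives $\left\vert\xi_{i\overline{j}}\right\vert^{2}\geq m$, so $\left\vert\xi_{i\overline{j}}\right\vert^{2}+\left\vert\xi_{ij}\right\vert^{2}\geq m+1$, matching the Bochner upper bound and turning every inequality into an equality. The support-function/Hessian-comparison input is the ingredient your argument lacks.
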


\begin{proof}
Without loss of generality we can assume that $X_{1}=\left(  \nabla\xi
-\sqrt{-1}J\nabla\xi\right)  /\sqrt{2}$. Therefore%
\[
\xi_{1}=\frac{1}{\sqrt{2}},\xi_{i}=0\text{ for }i\geq2.
\]
Suppose $\xi$ is given as in (\ref{lim}). Let $p\in\widetilde{M}$ and we use
the construction preceding Lemma \ref{BF}. By the second part of that Lemma we
see that for any $s>0$ the function $u_{s}\left(  x\right)  =\xi\left(
p\right)  +d\left(  x,\gamma\left(  s\right)  \right)  -s$ is a support
function for $\xi$ from above at $p$. Moreover $u_{s}$ is clearly smooth at
$p$. Therefore we have at $p$%
\[
D^{2}\xi\leq D^{2}u_{s}.
\]
By the comparison theorem in \cite{LW2} we have%
\begin{align*}
\xi_{1\overline{1}}  &  \leq\left(  u_{s}\right)  _{1\overline{1}}\leq
\frac{\cosh2s}{\sinh2s},\\
\xi_{i\overline{i}}  &  \leq\left(  u_{s}\right)  _{i\overline{i}}\leq
\frac{\cosh s}{\sinh s}\text{ for }i\geq2.
\end{align*}
Taking limit as $s\rightarrow\infty$ yields $\xi_{i\overline{i}}\leq1.$On the
other hand we have $\sum_{i=1}^{m}\xi_{i\overline{i}}=\frac{1}{2}\Delta\xi=m$.
Therefore we must have
\begin{equation}
\xi_{i\overline{i}}=1. \label{hei}%
\end{equation}
By the Bochner formula we have%
\begin{align*}
0  &  =\frac{1}{2}\Delta\left\vert \nabla\xi\right\vert ^{2}=\left\vert
D^{2}\xi\right\vert ^{2}+\left\langle \nabla\xi,\nabla\Delta\xi\right\rangle
+Ric\left(  \nabla\xi,\nabla\xi\right) \\
&  \geq\left\vert D^{2}\xi\right\vert ^{2}-2\left(  m+1\right)  .
\end{align*}
Therefore%
\begin{equation}
\left\vert \xi_{i\overline{j}}\right\vert ^{2}+\left\vert \xi_{ij}\right\vert
^{2}\leq m+1. \label{fhe}%
\end{equation}
We have
\[
0\leq\left\vert \xi_{ij}+2\xi_{i}\xi_{j}\right\vert ^{2}=\left\vert \xi
_{ij}\right\vert ^{2}+2\xi_{ij}\xi_{\overline{i}}\xi_{\overline{j}}%
+2\xi_{\overline{i}\overline{j}}\xi_{i}\xi_{j}+1.
\]
By differentiating $\xi_{j}\xi_{\overline{j}}=\frac{1}{2}$ we obtain $\xi
_{ij}\xi_{\overline{j}}+\xi_{j}\xi_{i\overline{j}}=0,\xi_{\overline{i}j}%
\xi_{\overline{j}}+\xi_{j}\xi_{\overline{i}\overline{j}}=0$. Hence from the
previous inequality we obtain%
\begin{equation}
\left\vert \xi_{ij}\right\vert ^{2}\geq4\xi_{i\overline{j}}\xi_{\overline{i}%
}\xi_{j}-1=2\xi_{1\overline{1}}-1. \label{ho}%
\end{equation}
On the other hand%
\[
\left\vert \xi_{i\overline{j}}\right\vert ^{2}\geq\frac{1}{m}\left(  \frac
{1}{2}\Delta\xi\right)  ^{2}=m
\]
Combining this inequality with (\ref{fhe}) and (\ref{ho}) yields
\[
\xi_{1\overline{1}}\leq1.
\]
However we already proved that equality holds (\ref{hei}). By inspecting the
argument we conclude that $\xi$ satisfies the following over-determined system%
\[
\xi_{i\overline{j}}=\delta_{ij},\xi_{ij}=-2\xi_{i}\xi_{j}.
\]

\end{proof}

With such a function, it is proved by Li and J. Wang \cite{LW1} that
$\widetilde{M}$ is isometric to $\mathbb{R\times}N^{2m-1}$ with the metric%
\[
g=dt^{2}+e^{-4t}\theta_{0}^{2}+e^{-2t}\sum_{i=1}^{2\left(  m-1\right)  }%
\theta_{i}^{2},
\]
where $\left\{  \theta_{0},\theta_{1},\cdots,\theta_{2\left(  m-1\right)
}\right\}  $ is an orthonormal frame for $T^{\ast}N$. Moreover, since our
$\widetilde{M}$ is simply connected and has bounded curvature, $N$ is
isometric to the Heisenberg group by their theorem. Therefore $\widetilde{M}$
is isometric to the complex hyperbolic space $\mathbb{CH}^{m}$.

Theorem \ref{QK} for quaternionic K\"{a}hler manifolds is proved in the same
way, using the work of Kong, Li and Zhou \cite{KLZ} in which they proved a
Laplacian comparison theorem for quaternionic K\"{a}hler manifolds.

We close with some remarks. An obvious question is whether this theorem
remains true if the curvature condition is relaxed to $Ric\geq-2\left(
m+1\right)  $. This seems a very subtle question. It is quite unlikely that
the comparison theorem for K\"{a}hler manifolds could still hold in this case.
On the other hand it is conceivable Theorem \ref{Kan} will remain valid due to
some global reason. This hope is partly based on the recent work of Munteanu
\cite{Mu} in which a sharp estimate for the Kaimanovich entropy is derived
under the condition $Ric\geq-2\left(  m+1\right)  $.

\end{document}